\newtheorem{lemma}{Lemma}[section]
\newtheorem{theorem}[lemma]{Theorem}
\newtheorem{definition}[lemma]{Definition}
\numberwithin{equation}{section}
\title{\textsf{ Restricted cohomology of restricted   Lie superalgebras}}
\author{\textsc{Jixia Yuan,$^{1,2,}$}\footnote{Supported by   NNSF of China (11601135), Natural Science Foundation of Heilongjiang Province of China (QC2017002) and Project funded by China Postdoctoral Science Foundation(2018M630311)}\;\;\textsc{Liangyun Chen$^{1,}$}\footnote{Correspondence: chenly640@nenu.edu.cn. (L. Chen);  Supported by  NNSF of China (11771069  and  12071405)}\;\;  \textsc{ and Yan Cao$^{3,}$}\footnote{Supported by  NNSF of China (11801121), Natural Science Foundation of Heilongjiang Province of China (QC2018006) } \\
  \\
  \textit{1. School of Mathematical and Statistics},
  \textit{Northeast Normal University} \\
  \textit{Changchun 130024, China}\\
\\
  \ \ \textit{2. School of Mathematical Sciences},
  \textit{Heilongjiang University} \\
  \textit{Harbin 150080, China}\\
  \\
  \ \ \textit{3.  Department of Mathematics},
  \textit{Harbin University of Science and Technology} \\
  \textit{Harbin 150080, China}
  }
\date{ }
\begin{document}
\maketitle
\begin{quotation}
\noindent\textbf{Abstract.} Suppose the ground   field $\mathbb{F}$ is an algebraically closed  field characteristic of   $p>2$. In this paper,  we investigate the restricted cohomology theory of restricted Lie superalgebras. Algebraic interpretations of  low dimensional  restricted cohomology of restricted Lie superalgebra are given.  We show that there is a family of restricted model filiform  Lie superalgebra $L_{p,p}^{\lambda}$ structures parameterized   by elements $\lambda\in \mathbb{F}^{p}.$   We explicitly describe both the $1$-dimensional ordinary and restricted cohomology superspaces of $L_{p,p}^{\lambda}$ with coefficients   in the $1$-dimensional trivial module and show that  these superspaces are equal. We also describe the $2$-dimensional ordinary and restricted cohomology superspaces  of $L_{p,p}^{\lambda}$ with coefficients   in the $1$-dimensional trivial module and show that  these superspaces are unequal.
\\

\noindent \textbf{Mathematics Subject Classification}. 17B30, 17B56.

 \noindent \textbf{Keywords}.  restricted Lie superalgebras, restricted cohomology,  restricted model filiform
Lie superalgebras.

  \end{quotation}

  \setcounter{section}{0}
\section{Introduction}

As a natural generalization of Lie algebras, Lie superalgebras play an important role in the theoretical physics and mathematics.
Let $L=L_{\bar{0}}\oplus L_{\bar{1}}$ be a restricted Lie superalgebra. Then $L_{\bar{0}}$ is a restricted Lie algebra and $L_{\bar{1}}$ is a restricted $L_{\bar{0}}$-module. Therefore, restricted Lie superalgebras  play a central role in the theory of modular Lie superalgebras, just as in the modular Lie algebra situation.

Cohomology is a very important  tool  in the
research of    topology, smooth vector fields, holomorphic functions, etc.
Cohomology  theory is from the study of the topology of Lie groups and
vector fields on Lie groups by  Cartan. The standard complex of Lie algebra  was originally constructed by
Chevalley and Eilenberg in \cite{ce}.  Many conclusions on cohomology of  Lie algebras were given in \cite{gf1}, etc.
Hochschild first considered the cohomology theory of restricted Lie algebras in \cite{h}. In dissertation  \cite{e}, Evans  improved on the results in \cite{h} and  obtained a cochain complex that is capable of making computations. In recent years,   restricted cohomology theory of restricted Lie algebras have aroused the interest of a great many researchers, see  \cite{ef1,ef2,ef3,efp}. Shortly after the birth of supersymmetry  most basic constructions and results of the classical theory of Lie algebras are generalized to the case of Lie superalgebras. Fuks and Leites in \cite{fl} calculated the cohomology  of the classical Lie superalgebras with trivial coefficients.  In \cite{sz},  Scheunert and Zhang  introduced in detail the concepts of cohomology of  Lie superalgebra.  In \cite{sz1},  Su and Zhang  explicitly computed the $1, 2$-dimensional cohomology  of the classical Lie superalgebras $\mathfrak{sl}_{m|n}$ and $\mathfrak{osp}_{2|2n}$ with coefficients in the finite dimensional irreducible modules and the Kac modules.
In addition, the works of Boe, Kujawa, Nakano, Bagci, Lehrer, Poletaeva etc also promoted the development of cohomology theory of Lie superalgebras, see \cite{b,bkn,lnz}.
The papers of Iwai and Shimada \cite{is},   May \cite{m2} and Priddy \cite{p} from the 1960s   address
the problem of constructing a resolution for computing the cohomology of restricted
graded Lie algebras. Strictly speaking, these earlier works were written in the context of $\mathbb{Z}$-graded Lie algebras, also the same
methods may apply to Lie superalgebras by reducing all of the gradings modulo $2$. For a discussion of this earlier
work that is specifically in the context of Lie superalgebras, see \cite{d}.  The resolutions constructed in these earlier works are not entirely
explicit (owing to the inductive nature of the constructions), but the resolutions are
relatively explicit in low degrees. In recent years,   cohomology theory of restricted Lie superalgebras have aroused the interest of some researchers, see   \cite{b1}.

In the study of the reducibility of the varieties of nilpotent Lie algebras, Vergne introduced the concept of filiform Lie algebras, see \cite{v}. Since then, the study of the filiform Lie algebras, especially the model filiform Lie algebra, has become an important subject.  As what happens in the Lie case, (model) filiform Lie superalgebra is   an important subject of Lie superalgebra. Many conclusions on cohomology of the model filiform Lie superalgebra were given in \cite{g,bgkn,ly}.

In this paper, we are interested in the  cohomology theory of restricted Lie superalgebras. We give algebraic interpretations of   restricted cohomology of restricted Lie superalgebra $L$: The superspace of all restricted outer superderivations is equal to $1$-dimensional restricted cohomology of  $L$ with coefficients in the adjoint module; Let superspaces $M, N$ be two restricted $L$-modules,  then  equivalence classes of extensions of $N$ by $M$ are in one to one correspondence with $1$-dimensional restricted  cohomology of  $L$ with coefficients in the  module $ \mathrm{Hom}_{\mathbb{F}}(N, M)$; Equivalence classes of
 restricted central extensions of $L$  are in one to one correspondence  with  $2$-dimensional restricted  cohomology of  $L$ with coefficients in the  trivial module.  We show that there is a family $L_{p,p}^{\lambda}$ of restricted model filiform  Lie superalgebra structures parameterized   by elements $\lambda\in \mathbb{F}^{\lambda}.$    We explicitly describe both the $1, 2$-dimensional ordinary and restricted  cohomology   of $L_{p,p}^{\lambda}$ with coefficients   in the $1$-dimensional trivial module.

This  paper is organized as follows. In Section 2, we give some necessary concepts and notations. In   Section 3, we  give algebraic interpretations of   restricted cohomology of restricted Lie superalgebra. In Section 4, we determine both the $1, 2$-dimensional ordinary and restricted  cohomology of restricted model filiform  Lie superalgebra with coefficients   in the $1$-dimensional trivial module.

\section{Preliminaries}
Let $\mathbb{F}$ be an algebraically closed field of characteristics $p>2$.  All unspecified vector superspaces are taken over $\mathbb{F}$. Suppose
$\mathbb{Z}_2:= \{\bar{0},\bar{1}\}$ is the additive group of order 2.  Suppose $ \mathfrak{A}:= \mathfrak{A}_{\bar{0}}\oplus  \mathfrak{A}_{\bar{1}}$ is a
 superalgebra.  We write $|x|:=\theta$
for the \textit{parity} of a $\mathbb{Z}_{2}$-homogeneous element $x\in  \mathfrak{A}_{\theta}, $
$\theta\in \mathbb{Z}_{2}.$

\subsection{Restricted Lie superalgebras}
Let us  recall the definitions of Lie superalgebras and restricted Lie superalgebras \cite{k1,zl}.
\begin{definition}
A Lie superalgebra is a vector superspace $L=L_{\bar{0}}\oplus
L_{\bar{1}}$ with an even bilinear   mapping
$[\cdot,\cdot]: L\times L\longrightarrow L$   satisfying the following axioms:
\begin{eqnarray*}
&& [x,y]=-(-1)^{|x||y|}[y,x],\\
&&[x,[y,z]]=[[x,y],z]+(-1)^{|x||y|}[y,[x,z]]
\end{eqnarray*}
for all $x, y, z\in L.$
\end{definition}
\begin{definition}
A restricted  Lie superalgebras is   a Lie superalgebra $L=L_{\bar{0}}\oplus L_{\bar{1}}$    together with a map $[p]: L_{\bar{0}}\longrightarrow L_{\bar{0}}$, denoted by $x\longmapsto x^{[p]}$, that satisfies:
\begin{itemize}
  \item [(1)] $(L_{\bar{0}}, [p])$ is a restricted  Lie  algebra,
  \item [(2)] $L_{\bar{1}}$ is a restricted  $L_{\bar{0}}$-module.
\end{itemize}
\end{definition}

\subsection{Ordinary  and restricted cohomology of restricted Lie superalgebras}
Let us  give some definitions relative to cohomology of Lie superalgebras\cite{sz} and   restricted cohomology of Lie superalgebras. For details  of ordinary cohomologies we refer to \cite{b2,fl,sz}, which are the foundational papers on cohonology of Lie superalgebras.  By \cite{d,e,is,m2,p}, we  derive the types of  $1, 2$-dimensional  restricted  cohomology of restricted Lie superalgebras.

 Let $L$ be a Lie superalgebra and $M=M_{\bar{0}}\oplus M_{\bar{1}}$ be a $L$-module. For $q\geq0$, we let $\bigwedge^{q}L$  be the $q$-th super-exterior product of $L$, that is  $\bigwedge^{q}L$ is the $q$-fold tensor product of $L$ modulo the $L$-submodule generated by the elements of the form:
$$x_{1}\otimes\cdots\otimes x_{k}\otimes x_{k+1}\otimes\cdots \otimes x_{q}+(-1)^{|x_{k}||x_{k+1}|}x_{1}\otimes\cdots\otimes x_{k+1}\otimes x_{k}\otimes\cdots \otimes x_{q}$$
for $x_{1},\ldots, x_{q}\in L.$ Set
$$C^{q}(L; M)=\mathrm{Hom}_{\mathbb{F}}(\bigwedge^{q}L, M).$$
There spaces $C^{q}(L; M)$ are naturally $\mathbb{Z}_{2}$-graded.
Note that
$$C^{0}(L; M)=\mathrm{Hom}_{\mathbb{F}}(\mathbb{F}, M)\cong M.$$

 Let
$d^{q}:C^{q}(L; M)\longrightarrow C^{q+1}(L; M)$  be given by the formula:
\begin{eqnarray}\label{e2.1}\nonumber
d^{q}(\varphi)(x_{1},\ldots, x_{q+1})&=&\sum_{1\leq i< j\leq q+1}(-1)^{\sigma_{ij}(x_{1},\ldots, x_{q+1})}\varphi([x_{i},x_{j}], x_{1},\ldots,\hat{x}_{i},\ldots \hat{x}_{j},\ldots, x_{q+1})\\
&&+\sum_{i=1}^{q+1}(-1)^{\gamma_{i}(x_{1},\ldots, x_{q+1},\varphi)}x_{i}\varphi(x_{1},\ldots,\hat{x}_{i},\ldots, x_{q+1}),
\end{eqnarray}
where $x_{1},\ldots, x_{q+1}\in L,$ $\varphi\in C^{q}(L; M)$ and
$$\sigma_{ij}(x_{1},\ldots, x_{q+1})=i+j+|x_{i}|(|x_{1}|+\cdots+|x_{i-1}|)+|x_{j}|(|x_{1}|+\cdots+|x_{j-1}|+|x_{i}|),$$
$$\gamma_{i}(x_{1},\ldots, x_{q+1},\varphi)=i+1+|x_{i}|(|x_{1}|+\cdots+|x_{i-1}|+|\varphi|).$$
A direct verification shows that $d^{q}d^{q-1}=0$ and $|d^{q}|=\bar{0}.$ The elements of  kernel of $d^{q}$ are called $q$-dimensional cocycles and  the elements of  image of $d^{q-1}$ are called $q$-dimensional  coboundaries. We will denote the $q$-dimensional cocycles and coboundaries by $Z^{q}(L; M)$ and $B^{q}(L; M)$,  respectively.
\begin{definition}
We called $H^{q}(L; M)=Z^{q}(L; M)/ B^{q}(L; M)$ is an ordinary cohomology  of $L$ with coefficients   in the module $M.$
\end{definition}

Let $q\leq 1,$ define $C_{*}^{q}(L; M)=C^{q}(L; M)$. If $\varphi\in C^{2}(L; M)$ and $\omega: L_{\bar{0}}\longrightarrow M$, we say that $\omega$ has the $*$-property with respect to $\varphi$ if for all $\lambda\in \mathbb{F}$ and all $x, y\in L_{\bar{0}}$
\begin{itemize}
  \item [(i)] $\omega(\lambda x)=\lambda^{p}\omega(x)$,
  \item [(ii)] $\omega(x+y)=\omega(x)+\omega(y)+\sum\limits_{x_{i}=x \; or\; y \atop x_{1}=x, x_{2}=y}\frac{1}{\sharp(x)}\sum_{k=0}^{p-2} (-1)^{k} x_{p}\cdots x_{p-k+1}\varphi([x_{1},\ldots,x_{p-k-1}], x_{p-k}),$
      where $\sharp(x)$ is the number of factors $x_{i}$ equal to  $x$ and
      $$[x_{1},\ldots,x_{p-k-1}]=[[[\cdots[[x_{1}, x_{2}],x_{3}],\cdots],  x_{p-k-2}],x_{p-k-1}].$$

\end{itemize}
Set
$$C_{*}^{2}(L; M)=\{(\varphi, \omega)\mid \varphi\in C^{2}(L; M), \omega: L_{\bar{0}}\longrightarrow M  \;\mbox{has the}\; *\mbox{-property w.r.t.}\; \varphi \}.$$

If $\alpha\in C^{3}(L; M)$ and $\beta: L_{\bar{0}}\times L_{\bar{0}}\longrightarrow M$, we say that $\beta$ has the $**$-property w.r.t.  $\alpha$ if for all $\lambda\in \mathbb{F}$ and all $x, y, y_{1}, y_{2}\in L_{\bar{0}}$
\begin{itemize}
  \item [(i)] $\beta(x, y)$ is linear with respect to $x$,
    \item [(ii)] $\beta(x, \lambda y)=\lambda^{p}\beta(x, y)$,
  \item [(iii)]
  \begin{eqnarray*}
  \beta(x, y_{1}+y_{2})&=&\beta(x,y_{2})+\beta(x, y_{2})-\sum\limits_{h_{i}=y_{1} \; or\; y_{2} \atop h_{1}=y_{1}, h_{2}=y_{2}}\frac{1}{\sharp(y_{1})}\sum_{j=0}^{p-2} (-1)^{j}\sum_{k=1}^{j}C_{j}^{k} h_{p}\cdots h_{p-k+1}\cdot\\
  &&\alpha([x,h_{p-k},\ldots, h_{p-j+1}],[h_{1},\ldots,h_{p-j-1}], h_{p-j}).
  \end{eqnarray*}

\end{itemize}
Set
$$C_{*}^{3}(L; M)=\{(\alpha, \beta)\mid \alpha\in C^{3}(L; M), \beta: L_{\bar{0}}\times L_{\bar{0}}\longrightarrow M  \;\mbox{has the}\; **\mbox{-property w.r.t.}\;\alpha\}.$$
Let $q\leq 3$, the $\mathbb{Z}_{2}$-grading of $C^{q}(L; M)$ is inherited by
$$C_{*}^{q}(L; M)=C_{*}^{q}(L; M)_{\bar{0}}\oplus C_{*}^{q}(L; M)_{\bar{1}},$$
where
$$C_{*}^{q}(L; M)_{\theta}=\{(\alpha, \beta)\in C_{*}^{q}(L; M)\mid |\alpha|=\theta \} \; \mbox{for all}\; \theta\in \mathbb{Z}_{2}.$$
Now, an element $\varphi\in C_{*}^{1}(L; M)$  induces a map $\mathrm{ind}^{1}(\varphi): L_{\bar{0}}\longrightarrow M$ by the formula
 \begin{eqnarray}\label{e2.2}
 \mathrm{ind}^{1}(\varphi)(x)=\varphi(x^{[p]})-x^{p-1}\varphi(x) \;\mbox{for all}\; x\in L_{\bar{0}}.
  \end{eqnarray}
Since  $d^{1}\varphi\mid_{L_{\bar{0}}\times L_{\bar{0}}}\in C^{2}(L_{\bar{0}}; M)$, then  the  \cite[Lemma 3.7]{e} shows that $\mathrm{ind}^{1}(\varphi)$ satisfies the $*$-property w.r.t. $d^{1}\varphi.$ An element $(\alpha, \beta)\in C_{*}^{2}(L; M)$  induces a map $\mathrm{ind}^{2}(\alpha, \beta): L_{\bar{0}}\times L_{\bar{0}}\longrightarrow M$ by the formula
 \begin{eqnarray}\label{e2.3}
 \mathrm{ind}^{2}(\alpha,\beta)(x,y)=\alpha(x, y^{[p]})-\sum_{i+j=p-1}(-1)^{i}y^{i}\alpha ([x,\underbrace{y,\ldots, y}_{j}],y)+x\beta(y) \;\mbox{for all}\; x, y\in L_{\bar{0}}.
  \end{eqnarray}
Since $d^{2}\alpha\mid_{L_{\bar{0}}\times L_{\bar{0}}\times L_{\bar{0}}}\in C^{3}(L_{\bar{0}}; M)$,  then  the  \cite[Lemma 3.10]{e} shows that $\mathrm{ind}^{2}(\alpha, \beta)$ satisfies the $**$-property w.r.t. $d^{2}\alpha.$
 The restricted differentials are defined by
  \begin{eqnarray}\label{e2.4}
  &&d_{*}^{0}: C_{*}^{0}(L; M)\longrightarrow C_{*}^{1}(L; M)\;\;\;\; d_{*}^{0}=d^{0},\\\label{e2.5}
  &&d_{*}^{1}: C_{*}^{1}(L; M)\longrightarrow C_{*}^{2}(L; M)\;\;\; d_{*}^{1}(\varphi)=(d^{1}\varphi, \mathrm{ind}^{1}(\varphi)),\\\label{e2.6}
  &&d_{*}^{2}: C_{*}^{2}(L; M)\longrightarrow C_{*}^{3}(L; M)\;\;\; d_{*}^{2}(\alpha, \beta)=(d^{2}\alpha, \mathrm{ind}^{2}(\alpha,\beta)).
  \end{eqnarray}
Let $q\leq2.$ We can  show that $d_{*}^{q}d_{*}^{q-1}=0$ and $|d_{*}^{q}|=\bar{0}$.  The elements of  kernel of $d_{*}^{q}$ are called $q$-dimensional  restricted cocycles and  the elements of  image of $d_{*}^{q-1}$ are called $q$-dimensional restricted  coboundaries. We will denote the $q$-dimensional restricted  cocycles and restricted  coboundaries by $Z_{*}^{q}(L; M)$ and $B_{*}^{q}(L; M)$,  respectively.
\begin{definition}
We called $H_{*}^{q}(L; M)=Z_{*}^{q}(L; M)/ B_{*}^{q}(L; M)$ is $q$-dimensional restricted  cohomology  of $L$ with  coefficients   in the module $M.$
\end{definition}
Note that $H^{0}(L; M)=H_{*}^{0}(L; M).$

\section{Algebraic interpretations}
In this section, we give the algebraic interpretations of low dimensional restricted cohomology  of restricted Lie superalgebras and show
that equivalence classes of these objects are naturally corresponding to the  restricted
cohomology  of restricted Lie superalgebras defined above.
\begin{definition}
Let $L$ be a restricted Lie superalgebra. An  homogeneous linear map $D: L\longrightarrow L$ is called a restricted superderivation of $L$ with parity $|D|$ if for all $x, y\in L$, $z\in L_{\bar{0}}$
  \begin{eqnarray}\label{e3.1}
  &&D([x, y])=(-1)^{|D||x|}[x, D(y)]+[D(x), y],\\\label{e3.2}
  &&D(z^{[p]})=(\mathrm{ad}z)^{p-1}D(z).
  \end{eqnarray}
\end{definition}
Write $(\mathrm{Der}_{\mathrm{res.}})_{\bar{0}}(L)$ (resp. $(\mathrm{Der}_{\mathrm{res.}})_{\bar{1}}(L)$) for the set of all restricted superderivations with parity $\bar{0}$ (resp.  $\bar{1}$) of $L$.  Denote
$$\mathrm{Der}_{\mathrm{res.}}(L)=(\mathrm{Der}_{\mathrm{res.}})_{\bar{0}}(L)\oplus (\mathrm{Der}_{\mathrm{res.}})_{\bar{1}}(L).$$
For  $x\in L$, we have $\mathrm{ad}x\in \mathrm{Der}_{\mathrm{res.}}(L)$ and then $\mathrm{ad}L\vartriangleleft \mathrm{Der}_{\mathrm{res.}}(L).$ We will call restricted superderivations of the form $\mathrm{ad}x$ inner and elements of the $\mathrm{Der}_{\mathrm{res.}}(L)/\mathrm{ad}L$ outer.

\begin{theorem}
Let $L$ is a restricted Lie superalgebra, then
$$\mathrm{Der}_{\mathrm{res.}}(L)/\mathrm{ad}L=H^{1}_{*}(L; L).$$
\end{theorem}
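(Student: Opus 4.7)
The plan is to identify the restricted cocycles $Z^1_*(L;L)$ with $\mathrm{Der}_{\mathrm{res.}}(L)$ and the restricted coboundaries $B^1_*(L;L)$ with $\mathrm{ad}\,L$, after which the theorem is immediate from the definition $H^1_*(L;L) = Z^1_*(L;L)/B^1_*(L;L)$.

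First I would establish $Z^1_*(L;L) = \mathrm{Der}_{\mathrm{res.}}(L)$. An element $\varphi \in C^1_*(L;L) = \mathrm{Hom}_{\mathbb{F}}(L,L)$ is a restricted cocycle iff $d^1_*(\varphi) = (d^1\varphi,\,\mathrm{ind}^1(\varphi)) = 0$, that is, $d^1\varphi = 0$ and $\mathrm{ind}^1(\varphi) = 0$. Expanding $d^1\varphi(x,y) = 0$ via (2.1) with the adjoint action $x\cdot m = [x,m]$ and collecting the Koszul signs $\sigma_{12}$, $\gamma_1$, $\gamma_2$, together with the graded antisymmetry of the superbracket, one recovers the graded Leibniz rule
\[
\varphi([x,y]) = [\varphi(x),y] + (-1)^{|\varphi||x|}[x,\varphi(y)],
\]
which is exactly (3.1). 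Under the same adjoint action $x^{p-1}\varphi(x) = (\mathrm{ad}\,x)^{p-1}\varphi(x)$ for $x \in L_{\bar{0}}$, so $\mathrm{ind}^1(\varphi)(x) = \varphi(x^{[p]}) - x^{p-1}\varphi(x) = 0$ becomes $\varphi(x^{[p]}) = (\mathrm{ad}\,x)^{p-1}\varphi(x)$, which is (3.2). Hence $\varphi$ is a restricted 1-cocycle precisely when $\varphi$ is a restricted superderivation.

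Next I would show $B^1_*(L;L) = \mathrm{ad}\,L$. By (2.4) a restricted 1-coboundary has the form $d^0_*(y) = d^0(y)$ for some $y \in C^0_*(L;L) \cong L$. Applying (2.1) with $q=0$ to the adjoint module gives $d^0(y)(x) = (-1)^{|x||y|}x\cdot y = (-1)^{|x||y|}[x,y] = -[y,x] = -(\mathrm{ad}\,y)(x)$, where the penultimate equality uses graded antisymmetry. Thus $d^0(y) = -\mathrm{ad}\,y$, and since the paper has already noted that $\mathrm{ad}\,L \vartriangleleft \mathrm{Der}_{\mathrm{res.}}(L)$, the quotient is well-defined and $B^1_*(L;L)$ coincides with $\mathrm{ad}\,L$ as a subspace of $\mathrm{Hom}_{\mathbb{F}}(L,L)$.

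The main technical obstacle is the sign bookkeeping in the first step: one has to combine $\sigma_{12} = 3 + |x||y|$ and the $\gamma_i$ factors (both carrying $|\varphi|$) with the graded antisymmetry identity $[y,\varphi(x)] = -(-1)^{|y|(|\varphi|+|x|)}[\varphi(x),y]$ to land exactly on the form (3.1); one must also verify that this equivalence respects the $\mathbb{Z}_2$-grading of the cochain complex, so that homogeneous cocycles correspond to homogeneous restricted superderivations. Once these sign computations are in order, the restricted condition and the identification of coboundaries with inner superderivations follow by direct substitution, and passing to quotients yields $H^1_*(L;L) = \mathrm{Der}_{\mathrm{res.}}(L)/\mathrm{ad}\,L$.
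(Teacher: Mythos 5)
Your proposal is correct and follows essentially the same route as the paper: you identify $Z^{1}_{*}(L;L)$ with $\mathrm{Der}_{\mathrm{res.}}(L)$ by unwinding $d^{1}\varphi=0$ and $\mathrm{ind}^{1}(\varphi)=0$ into the identities (3.1) and (3.2), and you identify $B^{1}_{*}(L;L)$ with $\mathrm{ad}\,L$ via $d^{0}_{*}(y)=-\mathrm{ad}\,y$, exactly as in the paper's two-step argument. (One minor slip in your sign commentary: $\sigma_{12}=1+2+2|x_{1}||x_{2}|\equiv 1 \pmod 2$, not $3+|x||y|$, but this does not affect the Leibniz identity you arrive at.)
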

\begin{proof}

 (1) Let  $\varphi\in C_{*}^{1}(L; L)$. By the  equation (\ref{e2.5}) we know  that $\varphi$ is a $1$-dimensional restricted cocycle if and only if $d^{1}\varphi=0$  and $\mathrm{ind}^{1}(\varphi)=0.$
\begin{itemize}
  \item [(1.1)]  If $d^{1}\varphi=0$, then by the  equation (\ref{e2.1}) we have
    \begin{eqnarray*}
0=d^{1}\varphi(x, y)&=&-\varphi([x, y])+(-1)^{|x||\varphi|}[x, \varphi(y)]-(-1)^{|y|(|\varphi|+|x|)}[y, \varphi(x)]\\
&=&-\varphi([x, y])+(-1)^{|x||\varphi|}[x, \varphi(y)]+[\varphi(x), y]
  \end{eqnarray*}
  for all $x, y\in L.$ So $\varphi$ satisfies the  equation (\ref{e3.1}).
  \item [(1.2)] If $\mathrm{ind}^{1}(\varphi)=0$,  then by the  equation (\ref{e2.2}) we have
    \begin{eqnarray*}
    &&0=\mathrm{ind}^{1}(\varphi)(z)=\varphi(z^{[p]})-z^{p-1}\varphi(z)
  \end{eqnarray*}
 for all $z\in L_{\bar{0}}.$ So $\varphi$ satisfies the  equation (\ref{e3.2}).
\end{itemize}
In summary, $\varphi$ is a $1$-dimensional restricted  cocycle if and only if $\varphi\in \mathrm{Der}_{\mathrm{res.}}(L),$ that is
$$Z^{1}_{*}(L; L)=\mathrm{Der}_{\mathrm{res.}}(L).$$

(2) Let $\psi\in B^{1}_{*}(L; L)$, then there is $x\in C^{0}_{*}(L; L)\cong L$ such that $\psi=d^{0}_{*}(x).$ So by the  equations (\ref{e2.1}) and (\ref{e2.4}) we have
$$\psi(y)=d^{0}_{*}(x)(y)=(-1)^{|x||y|}[y, x]=-[x, y]$$
for all $y\in L.$ Therefore, $\psi=-\mathrm{ad}x.$ It follows that $B^{1}_{*}(L; L)=\mathrm{ad}L.$

The proof is complete.
\end{proof}

\begin{definition}Let $L$ be a restricted Lie superalgebra and superspaces   $M, N$ be two restricted $L$-modules. A restricted extension of $N$  by $M$ is an exact sequence
\begin{eqnarray}\label{e3.3}
 0\longrightarrow M\stackrel{\iota}{\longrightarrow} E\stackrel{\pi}{\longrightarrow} N \longrightarrow 0
\end{eqnarray}
of restricted $L$-modules and homomorphisms.
Two extensions of $N$ by $M$ are equivalent if there is an isomorphism of restricted $L$-modules $\sigma: E_{1}\longrightarrow E_{2}$ so that
\begin{eqnarray}\label{e3.7}
\begin{array}[c]{ccccccccc}
 0&\longrightarrow &M&\stackrel{\iota_{1}}{\longrightarrow}& E_{1}&\stackrel{\pi_{1}}{\longrightarrow}& N& \longrightarrow& 0\\
 &&\parallel&\circlearrowright&\downarrow\scriptstyle{\sigma}&\circlearrowright&\parallel&&\\
 0&\longrightarrow &M&\stackrel{\iota_{2}}{\longrightarrow}& E_{2}&\stackrel{\pi_{2}}{\longrightarrow}& N& \longrightarrow& 0
\end{array}
\end{eqnarray}
commutes, and we ask for a description of the set $\mathrm{Ext_{res.}}(N, M)$ of equivalence classes of extensions of $N$ by $M$.
\end{definition}

Let $L$ be a restricted Lie superalgebras and $M, N$ be two restricted $L$-modules. For $x\in L$, $\phi\in \mathrm{Hom}_{\mathbb{F}}(N, M)$ and $n\in N$ we define
\begin{eqnarray}\label{e3.5}
(x\phi)(n)=x\phi(n)-(-1)^{|\phi||x|}\phi(xn).
\end{eqnarray}
Then $\mathrm{Hom}_{\mathbb{F}}(N, M)$ is a restricted $L$-module and we have the following Theorem.
\begin{theorem}
Let $L$ be a restricted Lie superalgebras and $M, N$ be two restricted $L$-modules. Then $\mathrm{Ext_{res.}}(N, M)$ is in one to one correspondence $H_{*}^{1}(L; \mathrm{Hom}_{\mathbb{F}}(N, M))$. In particular,  $\mathrm{Ext_{res.}}(\mathbb{F}, M)$ is in one to one correspondence $H_{*}^{1}(L; M)$.
\end{theorem}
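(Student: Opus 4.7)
The plan is to adapt the classical Lie algebra argument to the restricted super setting, the key new ingredient being that the restricted module structure on the extension translates to the $\mathrm{ind}^{1}$ condition at the cochain level. Given a restricted extension $0\longrightarrow M\stackrel{\iota}{\longrightarrow} E\stackrel{\pi}{\longrightarrow} N\longrightarrow 0$, I first choose an even linear section $s\colon N\longrightarrow E$ of $\pi$. Since $\pi$ is an $L$-module homomorphism, the element $x\cdot s(n)-s(x\cdot n)$ lies in $\ker\pi=\iota(M)$ for every $x\in L$ and $n\in N$, so I may define $\varphi_{s}\in C^{1}(L;\mathrm{Hom}_{\mathbb{F}}(N,M))$ by
$$\iota(\varphi_{s}(x)(n))=x\cdot s(n)-s(x\cdot n).$$
Expanding the Lie-module identity $[x,y]\cdot s(n)=x\cdot(y\cdot s(n))-(-1)^{|x||y|}y\cdot(x\cdot s(n))$, projecting via $\iota^{-1}$, and comparing with the action (\ref{e3.5}) on $\mathrm{Hom}_{\mathbb{F}}(N,M)$ yields $d^{1}\varphi_{s}=0$.

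For the restricted condition, an easy induction on $k$ using $\rho_{E}(x)\circ\iota=\iota\circ\rho_{M}(x)$ gives, for $x\in L_{\bar{0}}$,
$$\rho_{E}(x)^{k}(s(n))=s(x^{k}n)+\iota\bigl(\textstyle\sum_{i=0}^{k-1}x^{k-1-i}\varphi_{s}(x)(x^{i}n)\bigr).$$
Taking $k=p$, using that $E$ is restricted to identify $\rho_{E}(x)^{p}(s(n))=\rho_{E}(x^{[p]})(s(n))=s(x^{[p]}n)+\iota(\varphi_{s}(x^{[p]})(n))$, and using $x^{p}n=x^{[p]}n$ in $N$, one obtains $\varphi_{s}(x^{[p]})(n)=\sum_{i=0}^{p-1}x^{p-1-i}\varphi_{s}(x)(x^{i}n)$. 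On the other hand, iterating (\ref{e3.5}) gives $(x^{p-1}\varphi_{s}(x))(n)=\sum_{i=0}^{p-1}\binom{p-1}{i}(-1)^{i}x^{p-1-i}\varphi_{s}(x)(x^{i}n)$, which coincides with the previous sum thanks to the characteristic-$p$ identity $\binom{p-1}{i}(-1)^{i}\equiv 1\pmod{p}$. Hence $\varphi_{s}(x^{[p]})=x^{p-1}\varphi_{s}(x)$, so $\mathrm{ind}^{1}(\varphi_{s})=0$ by (\ref{e2.2}), and $\varphi_{s}\in Z^{1}_{*}(L;\mathrm{Hom}_{\mathbb{F}}(N,M))$.

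A different section differs from $s$ by $\iota\circ\psi$ for some $\psi\in\mathrm{Hom}_{\mathbb{F}}(N,M)\cong C^{0}_{*}(L;\mathrm{Hom}_{\mathbb{F}}(N,M))$, and the two resulting cocycles differ by $d^{0}_{*}\psi$; an equivalence $\sigma\colon E_{1}\longrightarrow E_{2}$ as in (\ref{e3.7}) sends any section $s_{1}$ of $\pi_{1}$ to a section $\sigma\circ s_{1}$ of $\pi_{2}$ producing the same cocycle, so the assignment $[E]\mapsto[\varphi_{s}]$ is well-defined. For the inverse, given $\varphi\in Z^{1}_{*}(L;\mathrm{Hom}_{\mathbb{F}}(N,M))$, I set $E(\varphi)=M\oplus N$ with action $x\cdot(m,n)=(x\cdot m+\varphi(x)(n),x\cdot n)$; the ordinary cocycle condition makes $E(\varphi)$ an $L$-module, and reversing the calculation above shows that $\mathrm{ind}^{1}(\varphi)=0$ is exactly what guarantees $\rho_{E(\varphi)}(x^{[p]})=\rho_{E(\varphi)}(x)^{p}$. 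The two constructions are mutual inverses: the canonical section $n\mapsto(0,n)$ of $E(\varphi)$ recovers $\varphi$, while $(m,n)\mapsto\iota(m)+s(n)$ gives an equivalence $E(\varphi_{s})\longrightarrow E$. The particular case is immediate from $\mathrm{Hom}_{\mathbb{F}}(\mathbb{F},M)\cong M$ as restricted $L$-modules. The main obstacle is matching the $p$-fold iteration $\rho_{E}(x)^{p}$ with the $(p-1)$-fold action of $x$ on $\mathrm{Hom}_{\mathbb{F}}(N,M)$; this is precisely where the characteristic-$p$ binomial identity enters, while the super-parity bookkeeping in the Jacobi identity is routine but must be carried out carefully.
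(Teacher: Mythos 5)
Your proposal is correct and follows essentially the same route as the paper: choose a linear section, let the cocycle measure its failure to be a module map, verify $d^{1}\varphi_{s}=0$ and $\mathrm{ind}^{1}(\varphi_{s})=0$, and invert by equipping $M\oplus N$ with the twisted action, matching coboundaries with changes of section and equivalences of extensions. The only real difference is presentational: where the paper compresses the restricted condition to $\mathrm{ind}^{1}(\varphi_{\rho})(x)=x^{[p]}\rho-x^{p}\rho=0$ using the module action on $\mathrm{Hom}_{\mathbb{F}}(N,E)$, you expand the same fact via the induction on $\rho_{E}(x)^{k}$ and the identity $\binom{p-1}{i}(-1)^{i}\equiv 1\pmod p$, which is a valid (and arguably more transparent) verification of the identical step.
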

\begin{proof}
(1)Given an extension (\ref{e3.3}) of $N$ by $M$ and  choose a $\mathbb{F}$-linear map $\rho$ such that $\pi \rho=\mathrm{Id}_{N}.$ We define an element $\varphi_{\rho}\in C_{*}^{1}(L; \mathrm{Hom}_{\mathbb{F}}(N, M))$ by the formula
\begin{eqnarray}\label{e3.4}
 \varphi_{\rho}(x)(n)=(-1)^{|x||\rho|}x\rho(n)-\rho(xn) \;\mbox{for all}\;x \in L, n\in N,
\end{eqnarray}
that is $\varphi_{\rho}(x)=(-1)^{|x||\rho|}x\rho.$
 We claim that $d^{1}_{*}\varphi_{\rho}=(d^{1}\varphi_{\rho}, \mathrm{ind}^{1}(\varphi_{\rho}))=0$, that is $\varphi_{\rho}\in Z_{*}^{1}(L; \mathrm{Hom}_{\mathbb{F}}(N, M))$.
 \begin{itemize}
   \item [(1.1)] For all $x, y\in L$ and $n\in N$, by the  equations (\ref{e2.1}), (\ref{e3.5}) and (\ref{e3.4}) we have
   \begin{eqnarray*}
 d^{1}\varphi_{\rho}(x, y)(n)&=&-\varphi_{\rho}([x,y])(n)+(-1)^{|x||\varphi_{\rho}|}(x\varphi_{\rho}(y))(n)\\
 &&-(-1)^{|y|(|x|+|\varphi_{\rho}|)}(y\varphi_{\rho}(x))(n)\\
 &=&-(-1)^{(|x|+|y|)|\rho|}[x, y]\rho(n)+\rho([x,y]n)\\
 &&+(-1)^{|x||\varphi_{\rho}|}x\varphi_{\rho}(y)(n)-(-1)^{|y||x|}\varphi_{\rho}(y)(xn)\\
 &&-(-1)^{|y|(|x|+|\varphi_{\rho}|)}y\varphi_{\rho}(x)(n)+\varphi_{\rho}(x)(yn)\\
 &=&-(-1)^{(|x|+|y|)|\rho|}[x, y]\rho(n)+\rho([x,y]n)\\
 &&+(-1)^{|x||\varphi_{\rho}|}(-1)^{|y||\rho|}x y\rho(n)-(-1)^{|x||\varphi_{\rho}|}x\rho(yn)\\
 &&-(-1)^{|y||x|}(-1)^{|y||\rho|}y\rho(xn)+(-1)^{|y||x|}\rho(yxn)\\
 &&-(-1)^{|y|(|x|+|\varphi_{\rho}|)}(-1)^{|x||\rho|}y x\rho(n)+(-1)^{|y|(|x|+|\varphi_{\rho}|)}y\rho(xn)\\
 &&+(-1)^{|x||\rho|}x\rho(yn)-\rho(xyn)\\
  &=&[-(-1)^{(|x|+|y|)|\rho|}[x, y]\rho(n)+(-1)^{|x||\varphi_{\rho}|}(-1)^{|y||\rho|}x y\rho(n)\\
  &&-(-1)^{|y|(|x|+|\varphi_{\rho}|)}(-1)^{|x||\rho|}y x\rho(n)]\\
 &&+[\rho([x,y]n)+(-1)^{|y||x|}\rho(yxn)-\rho(xyn)]\\
 &&[-(-1)^{|x||\varphi_{\rho}|}x\rho(yn)+(-1)^{|x||\rho|}x\rho(yn)]\\
 &&[-(-1)^{|y||x|}(-1)^{|y||\rho|}y\rho(xn)+(-1)^{|y|(|x|+|\varphi_{\rho}|)}y\rho(xn)].
 \end{eqnarray*}
 Note that $|\varphi_{\rho}|=|\rho|$. Then  $d^{1}\varphi_{\rho}(x, y)(n)=0,$ that is $ d^{1}\varphi_{\rho}=0.$
   \item [(1.2)] For all $x\in L_{\bar{0}}$, by the  equations (\ref{e2.2}) and (\ref{e3.4}) we have $$\mathrm{ind}^{1}(\varphi_{\rho})(x)=\varphi_{\rho}(x^{[p]})-x^{p-1}\varphi_{\rho}(x)=x^{[p]}\rho-x^{p}\rho=0.$$
       Then $\mathrm{ind}^{1}(\varphi_{\rho})=0.$
 \end{itemize}

 (2)
 Conversely, if $\varphi\in Z_{*}^{1}(L; \mathrm{Hom}_{\mathbb{F}}(N, M))$, we construct an extension of $N$ by $M$ as follows. We set $E=N\oplus M$
 and
 \begin{eqnarray}\label{e3.6}
 x(n,m)=(xn, xm+(-1)^{|\varphi|(|x|+|n|)}\varphi(x)(n)),
 \end{eqnarray}
 where $x\in L$, $n\in N$ and $m\in M.$ We claim that $E$ is a restricted $L$-module and
 \begin{eqnarray*}
 0\longrightarrow M\hookrightarrow E\twoheadrightarrow N \longrightarrow 0
\end{eqnarray*}
is a   restricted extension of $N$  by $M$.
\begin{itemize}
  \item [(2.1)] Since  $\varphi\in Z_{*}^{1}(L; \mathrm{Hom}_{\mathbb{F}}(N, M))$, then by the  equation (\ref{e2.1}) we have
  \begin{eqnarray}\label{e3.7-1}
      \varphi([x,y])=(-1)^{|x||\varphi|}x\varphi(y)-(-1)^{|y|(|x|+|\varphi|)}y\varphi(x)
   \end{eqnarray}
  for all $x, y\in L$.       By  the  equations (\ref{e3.6}) and (\ref{e3.7-1})  we have
   \begin{eqnarray*}
 &&(xy-(-1)^{|x||y|}yx)(n,m)\\
 &=&x(yn, ym+(-1)^{|\varphi|(|y|+|n|)}\varphi(y)(n))\\
 &&-(-1)^{|x||y|}y(xn, xm+(-1)^{|\varphi|(|x|+|n|)}\varphi(x)(n))\\
 &=&(xyn, xym+(-1)^{|\varphi|(|y|+|n|)}x\varphi(y)(n)+(-1)^{|\varphi|(|x|+|y|+|n|)}\varphi(x)(yn))\\
 &&-(-1)^{|x||y|}(yxn, yxm+(-1)^{|\varphi|(|x|+|n|)}y\varphi(x)(n)+(-1)^{|\varphi|(|y|+|x|+|n|)}\varphi(y)(xn))\\
 &=&([x,y]n, [x,y]m+(-1)^{|\varphi|(|y|+|n|)}(x\varphi(y))(n)-(-1)^{|x||y|+|\varphi|(|x|+|n|)}(y\varphi(x))(n))\\
 &=&([x,y]n, [x,y]m+(-1)^{|\varphi|(|[x,y]|+|n|)}\varphi([x,y])(n))=[x,y](n,m)
\end{eqnarray*}
for all $n\in N$ and $m\in M.$
  \item [(2.2)]Since  $\varphi\in Z_{*}^{1}(L; \mathrm{Hom}_{\mathbb{F}}(N, M))$, then
\begin{eqnarray}\label{e3.7-2}
\varphi(x^{[p]})=x^{p-1}\varphi(x)
\end{eqnarray}
 for all $x\in L_{\bar{0}}.$        By the  equations (\ref{e3.6})  and (\ref{e3.7-2}) we have
 \begin{eqnarray*}
x^{[p]}(n,m)&=&(x^{[p]}n, x^{[p]}m+(-1)^{|\varphi||n|}\varphi(x^{[p]})(n))\\
 &=&(x^{p}n, x^{p}m+(-1)^{|\varphi||n|}x^{p-1}\varphi(x)(n))=x^{p}(n, m)
\end{eqnarray*}
for all $n\in N$ and $m\in M.$
\item [(2.3)]  Evidently
the canonical inclusion and projection are restricted Lie superalgebra homomorphisms with
this module structure so that we have an extension of $N$ by $M$.
\end{itemize}

(3)  We claim that $\mathrm{Ext_{res.}}(N, M)$ is in one to one correspondence $H_{*}^{1}(L; \mathrm{Hom}_{\mathbb{F}}(N, M))$.
\begin{itemize}
\item [(3.1)]   Suppose that $\varphi_{1}, \varphi_{2}\in Z^{1}_{*}(L; \mathrm{Hom}_{\mathbb{F}}(N, M))_{\theta}$, where $\theta\in \mathbb{Z}_{2},$ $E_{1}, E_{2}$  denote the corresponding extensions of $N$ by $M$. If  $f\in \mathrm{Hom}_{\mathbb{F}}(N, M)_{\theta}$ satisfies $d^{0}_{*}f=\varphi_{2}-\varphi_{1}$. We claim that $E_{1}$ and  $E_{2}$ are equivalent.  We define a map
   \begin{eqnarray}\label{e3.7-3}
   \sigma: E_{1}\longrightarrow E_{2}\;\;\; \sigma(n,m)=(n, m-(-1)^{\theta|n|}f(n))
   \end{eqnarray}
     for all $n\in N$ and $m\in M.$
     Clearly this map is an isomorphism of vector spaces making the diagram (\ref{e3.7}) commutes.
    \begin{itemize}
      \item [(3.1.1)] Let $x\in L$, $n\in N$ and $m\in M$. Then by the  equations (\ref{e2.4}), (\ref{e3.5}), (\ref{e3.6}) and (\ref{e3.7-3}) we have
       \begin{eqnarray*}
&&\sigma(x(n,m))\\
&=&\sigma(xn, xm+(-1)^{\theta(|x|+|n|)}\varphi_{1}(x)(n))\\
 &=&(xn, xm+(-1)^{\theta(|x|+|n|)}\varphi_{1}(x)(n)-(-1)^{\theta(|n|+|x|)}f(xn))\\
 &=&(xn, xm+(-1)^{\theta(|x|+|n|)}(\varphi_{2}(x)(n)-d^{0}_{*}f(x)(n))-(-1)^{\theta(|n|+|x|)}f(xn))\\
  &=&(xn, xm+(-1)^{\theta(|x|+|n|)}(\varphi_{2}(x)(n)-(-1)^{|x|\theta}(xf)(n))-(-1)^{\theta(|n|+|x|)}f(xn))\\
  &=&(xn, xm+(-1)^{\theta(|x|+|n|)}\varphi_{2}(x)(n)-(-1)^{\theta|n|}xf(n))\\
  &=&x(n, m-(-1)^{\theta|n|}f(n))\\
&=&x\sigma(n, m).
 \end{eqnarray*}
So  $\sigma$ is a $L$-module homomorphism.
      \item [(3.1.2)] Let $x\in L_{\bar{0}}$, $n\in N$ and $m\in M$. Then by the  equations (\ref{e2.4}), (\ref{e3.5}), (\ref{e3.6}) and (\ref{e3.7-3}) we have
       \begin{eqnarray*}
&&\sigma(x^{[p]}(n,m))\\
&=&\sigma(x^{[p]}n, x^{[p]}m+(-1)^{\theta|n|}\varphi_{1}(x^{[p]})(n))\\
&=&\sigma(x^{p}n, x^{p}m+(-1)^{\theta|n|}x^{p-1}\varphi_{1}(x)(n))\\
 &=&(x^{p}n, x^{p}m+(-1)^{\theta|n|}x^{p-1}\varphi_{1}(x)(n)-(-1)^{\theta|n|}f(x^{p}n))\\
 &=&(x^{p}n, x^{p}m+(-1)^{\theta|n|}x^{p-1}(\varphi_{2}(x)(n)-d^{0}_{*}f(x)(n))-(-1)^{\theta|n|}f(x^{p}n))\\
  &=&(x^{p}n, x^{p}m+(-1)^{\theta|n|}x^{p-1}\varphi_{2}(x)(n)-(-1)^{|n|\theta}(x^{p}f)(n))-(-1)^{\theta|n|}f(x^{p}n))\\
  &=&(x^{p}n, x^{p}m+(-1)^{\theta|n|}x^{p-1}\varphi_{2}(x)(n)-(-1)^{\theta|n|}x^{p}f(n))\\
  &=&x^{p}(n, m-(-1)^{\theta|n|}f(n))\\
&=&x^{p}\sigma(n, m).
 \end{eqnarray*}
So $\sigma$ is a homomorphism of restricted $L$-modules.
    \end{itemize}

In summary, $\sigma$ is an isomorphism of restricted $L$-modules. Therefore,  $E_{1}$ and  $E_{2}$ are equivalent.
 \item [(3.2)] Since $d^{0}_{*}=d^{0}$ and $C^{1}_{*}(L; \mathrm{Hom}_{\mathbb{F}}(N, M))=C^{1}(L; \mathrm{Hom}_{\mathbb{F}}(N, M)),$ it follows from the classical cohomology theory that   restricted cocycle whose cohomology class depends
only on the equivalence class of the extension.
 \end{itemize}
 The proof is complete.
\end{proof}
\begin{definition}
We   say that a
restricted Lie  superalgebra $K$ is strongly abelian if in addition to $[K, K]=0$, we also have  $K_{\bar{0}}^{[p]}=0.$
\end{definition}
\begin{definition}
Let $L$ be a restricted Lie superalgebras and $K$ be a strongly abelian restricted Lie superalgebras. A restricted extension of $L$  by $K$ is an exact sequence
\begin{eqnarray}\label{e3.8}
 0\longrightarrow K\stackrel{\iota}{\longrightarrow} E\stackrel{\pi}{\longrightarrow} L \longrightarrow 0
\end{eqnarray}
of restricted Lie superalgebras and homomorphisms.
Two extensions of $L$ by $K$ being  equivalent if there is an isomorphism of restricted Lie superalgebras $\sigma: E_{1}\longrightarrow E_{2}$ so that
\begin{eqnarray*}\label{e3.9}
\begin{array}[c]{ccccccccc}
 0&\longrightarrow &K&\stackrel{\iota_{1}}{\longrightarrow}& E_{1}&\stackrel{\pi_{1}}{\longrightarrow}& L& \longrightarrow& 0\\
 &&\parallel&\circlearrowright&\downarrow\scriptstyle{\sigma}&\circlearrowright&\parallel&&\\
 0&\longrightarrow &K&\stackrel{\iota_{2}}{\longrightarrow}& E_{2}&\stackrel{\pi_{2}}{\longrightarrow}& L& \longrightarrow& 0
\end{array}
\end{eqnarray*}
commutes.
\end{definition}
Since  $K$ is  a strongly abelian restricted Lie superalgebra and $\iota(K)\vartriangleleft E$, then  a
restricted extension of $L$ by $K$ gives  the structure of a $L$-module by the action $x\cdot k=[\tilde{x}, \iota(k)]$,
where  $x\in L,$ $k\in K$ and $\tilde{x}\in E$  is any element satisfying $\pi(\tilde{x})=x$.
Since $\pi(\tilde{x}^{[p]})=\pi(\tilde{x})^{[p]}=x^{[p]},$ so
\begin{eqnarray*}
x^{[p]}\cdot k=[\widetilde{x^{[p]}}, k]=[\tilde{x}^{[p]}, k]=(\mathrm{ad}\tilde{x})^{p}(k)=\underbrace{x\cdots x}_{p}\cdot k
\end{eqnarray*}
for all $x\in L_{\bar{0}}$ and $k\in K.$
Then $K$ is a restricted $L$-module.  We remark here that if $\iota(K)$ is contained in the center of $E$, then $K$ is a trivial
$L$-module. Such an extension is called central.
\begin{theorem}
Let $L$ be a restricted Lie superalgebras and $K$ be strongly abelian restricted Lie superalgebras. Then the set of equivalence classes of restricted center extensions of $L$ by $K$ is in one to one correspondence with $H^{2}_{*}(L; K)$.
In particular,     $H^{2}_{*}(L; \mathbb{F})$ is in one to one correspondence with equivalence classes of
$1$-dimensional restricted center extensions of $L$.
\end{theorem}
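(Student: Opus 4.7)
The plan is to establish the bijection by constructing explicit maps in both directions between equivalence classes of restricted central extensions and $H^2_*(L;K)$, then checking they descend to inverse bijections on equivalence/cohomology classes.

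First, given a restricted central extension $0 \to K \xrightarrow{\iota} E \xrightarrow{\pi} L \to 0$, I would choose an even $\mathbb{F}$-linear section $\rho: L \to E$ with $\pi\rho = \mathrm{Id}_L$ and define
\[
\varphi_\rho(x,y) = \iota^{-1}\bigl([\rho(x),\rho(y)] - \rho([x,y])\bigr),\qquad \omega_\rho(x) = \iota^{-1}\bigl(\rho(x)^{[p]} - \rho(x^{[p]})\bigr)
\]
for $x,y \in L$ (with $x \in L_{\bar 0}$ in the second formula); these land in $\iota(K)$ because $\pi$ is a homomorphism of restricted Lie superalgebras. The pair $(\varphi_\rho,\omega_\rho)$ lies in $C_*^2(L;K)$: the super-skew-symmetry of $\varphi_\rho$ is immediate from the bracket on $E$, and $\omega_\rho$ inherits the $*$-property from Jacobson's formula for $(u+v)^{[p]}$ applied to $\rho(x+y) = \rho(x) + \rho(y)$ in $E_{\bar 0}$, using that elements of $\iota(K)$ are central and $\iota(K)_{\bar 0}^{[p]} = 0$. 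The cocycle condition $d_*^2(\varphi_\rho,\omega_\rho) = 0$ then splits into two parts: $d^2\varphi_\rho = 0$ follows from the super-Jacobi identity in $E$, while $\mathrm{ind}^2(\varphi_\rho,\omega_\rho) = 0$ follows from the restricted module axiom $[x,y^{[p]}] = (\mathrm{ad}\,y)^p(x)$ applied to the lifts $\rho(x),\rho(y)$ in $E$ and then projected.

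Conversely, given $(\varphi,\omega) \in Z_*^2(L;K)$ I would set $E := L \oplus K$ as a vector superspace with bracket and $p$-operation
\[
[(x,k),(y,l)] = \bigl([x,y],\; x\cdot l - (-1)^{|k||y|}y\cdot k + \varphi(x,y)\bigr),\quad (x,k)^{[p]} = \bigl(x^{[p]},\;\omega(x)\bigr),
\]
for $x \in L_{\bar 0}$ in the second formula. Because the extension is central we take the $L$-action on $K$ to be trivial, so the bracket reduces to $([x,y],\varphi(x,y))$; the super-Jacobi identity for this bracket is exactly $d^2\varphi = 0$. The verification that $(x,k) \mapsto (x,k)^{[p]}$ satisfies the restricted Lie axioms (the scalar, additive, and commutator axioms from the definition of a restricted Lie algebra) is precisely the $*$-property of $\omega$ with respect to $\varphi$ together with $\mathrm{ind}^2(\varphi,\omega) = 0$; this is where I invoke Evans' Lemma 3.7 and 3.10 as cited after equations (2.2)–(2.3). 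The canonical inclusion $k \mapsto (0,k)$ and projection $(x,k) \mapsto x$ then give a restricted central extension.

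Finally, for the equivalence/cohomology correspondence I would show: (i) two sections $\rho_1,\rho_2$ of the same extension differ by an even linear map $f: L \to K$, and a direct computation shows $(\varphi_{\rho_2},\omega_{\rho_2}) - (\varphi_{\rho_1},\omega_{\rho_1}) = d_*^1(f)$, so the cohomology class is well-defined on extensions; (ii) an equivalence $\sigma: E_1 \to E_2$ of extensions carries a section $\rho_1$ of $E_1$ to a section $\sigma\rho_1$ of $E_2$ inducing the same cocycle, so equivalent extensions give the same cohomology class; (iii) in the other direction, if $(\varphi_2,\omega_2) - (\varphi_1,\omega_1) = d_*^1(f)$ then $\sigma(x,k) := (x,k + f(x))$ defines an isomorphism of restricted Lie superalgebras between the two constructed extensions making the diagram commute. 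The two constructions are mutually inverse on equivalence classes by construction. The specialization $K = \mathbb{F}$ with trivial action is immediate since $\mathbb{F}$ is automatically strongly abelian.

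The main obstacle will be the verification in the reverse direction that the $p$-operation $(x,k)^{[p]} = (x^{[p]},\omega(x))$ actually satisfies the additivity axiom $(u+v)^{[p]} = u^{[p]} + v^{[p]} + \sum s_i(u,v)$ for the Jacobson $s_i$-polynomials: this is precisely where the intricate $*$-property clause (ii) in the definition of $C_*^2(L;M)$ must be invoked, and matching its combinatorial form against the Jacobson formula expanded in $E = L \oplus K$ requires careful bookkeeping of the iterated brackets $[x_1,\ldots,x_{p-k-1}]$ and the factors $x_p\cdots x_{p-k+1}\varphi(\cdots)$.
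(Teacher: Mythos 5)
Your proposal is correct and follows essentially the same route as the paper's own proof: extract a cocycle pair $(\varphi_\rho,\omega_\rho)$ from a section, build $L\oplus K$ with bracket $([x,y],\varphi(x,y))$ and $p$-operation $(x^{[p]},\omega(x))$ from a cocycle, and match coboundaries with equivalences via $\sigma(x,k)=(x,k\pm f(x))$, delegating the $*$-property and $\mathrm{ind}^2$ verifications to Evans' lemmas exactly as the paper does. The only (harmless) difference is that you separately record independence of the choice of section, which the paper subsumes into the equivalence step.
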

\begin{proof}
\begin{itemize}
  \item [(1)]
Given an extension (\ref{e3.8}) of $L$ by $K$ and  choose a $\mathbb{F}$-linear map $\rho$ such that $\pi \rho=\mathrm{Id}_{L}.$ Since $\pi$  is a homomorphism of  restricted Lie superalgebras, so
$$[\rho(x), \rho(y)]-\rho([x, y]), \rho(x)^{[p]}-\rho(x^{[p]})\in \mathrm{Ker}\pi=\mathrm{Im}\iota.$$
Then  we can define $\alpha_{\rho}: L\times L\longrightarrow K$ and $\beta_{\rho}: L_{\bar{0}}\longrightarrow K$ with the formula
\begin{eqnarray}\label{e3.8-1}
\alpha_{\rho}(x, y)=\iota^{-1}([\rho(x), \rho(y)]-\rho([x, y])),
\end{eqnarray}
\begin{eqnarray*}\label{e3.11}
\beta_{\rho}(z)=\iota^{-1}(\rho(z)^{[p]}-\rho(z^{[p]})),
\end{eqnarray*}
where $x, y\in L$ and $z\in L_{\bar{0}}.$
 Similar to \cite[Theorem 3.7]{e}   we  can prove that $\beta_{\rho}$ has  the $*$-property with
respect to $\alpha_{\rho}.$ Then $(\alpha_{\rho}, \beta_{\rho})\in C^{2}_{*}(L; K).$ We claim that $(\alpha_{\rho}, \beta_{\rho})\in Z^{2}_{*}(L; K)$.
  \begin{itemize}
    \item [(1.1)] Let $x_{1}, x_{2},x_{3}\in L.$ By  the equations (\ref{e2.1}) and (\ref{e3.8-1}) we have
     \begin{eqnarray*}
d^{2}\alpha_{\rho}(x_{1}, x_{2},x_{3})&=&-\alpha_{\rho}([x_{1},x_{2}], x_{3})+(-1)^{|x_{2}||x_{3}|}\alpha_{\rho}([x_{1},x_{3}], x_{2})\\
&&-(-1)^{|x_{1}|(|x_{2}|+|x_{3}|)}\alpha_{\rho}([x_{2},x_{3}], x_{1})\\
&=&\iota^{-1}(-[\rho([x_{1},x_{2}]), \rho(x_{3})]+(-1)^{|x_{2}||x_{3}|}[\rho([x_{1},x_{3}]), \rho(x_{2})]\\
&&-(-1)^{|x_{1}|(|x_{2}|+|x_{3}|)}[\rho([x_{2},x_{3}]), \rho(x_{1})]\\
&&+\rho([x_{1},x_{2}], x_{3})-(-1)^{|x_{2}||x_{3}|}\rho([x_{1},x_{3}], x_{2})\\
&&+(-1)^{|x_{1}|(|x_{2}|+|x_{3}|)}\rho([x_{2},x_{3}], x_{1})).
\end{eqnarray*}
Since $\iota^{-1}([\rho(x_{1}), \rho(x_{2})]-\rho([x_{1}, x_{2}]))\in K$ and $[\iota(K),E]=0,$ so
$$\iota^{-1}([[\rho(x_{1}), \rho(x_{2})], \rho(x_{3})])=\iota^{-1}([\rho([x_{1}, x_{2}]), \rho(x_{3})]).$$
Then
     \begin{eqnarray*}
&&d^{2}\alpha_{\rho}(x_{1}, x_{2},x_{3})\\
&=&\iota^{-1}(-[[\rho(x_{1}),\rho(x_{2})], \rho(x_{3})]+(-1)^{|x_{2}||x_{3}|}[[\rho(x_{1}),\rho(x_{3})], \rho(x_{2})]\\
&&-(-1)^{|x_{1}|(|x_{2}|+|x_{3}|)}[[\rho(x_{2}),\rho(x_{3})], \rho(x_{1})]\\
&&+\rho([[x_{1},x_{2}], x_{3}]-(-1)^{|x_{2}||x_{3}|}[[x_{1},x_{3}], x_{2}])+(-1)^{|x_{1}|(|x_{2}|+|x_{3}|)}[[x_{2},x_{3}], x_{1}]))\\
&=&0,
\end{eqnarray*}
that is $d^{2}\alpha_{\rho}=0.$
    \item [(1.2)] Since  $K$ is a trivial
$L$-module, then by equations (\ref{e2.3})  and (\ref{e3.8-1}) we have
    \begin{eqnarray*}
\mathrm{ind}^{2}(\alpha_{\rho},\beta_{\rho})(x, y)&=&\alpha_{\rho}(x, y^{[p]})-\sum_{i+j=p-1}(-1)^{i}y^{i}\alpha_{\rho} ([x,\underbrace{y,\ldots, y}_{j}],y)+x\beta_{\rho}(y)\\
    &=&\iota^{-1}([\rho(x), \rho(y^{[p]})]-\rho([x, y^{[p]}]))-\alpha_{\rho} ([x,\underbrace{y,\ldots, y}_{p-1}],y)\\
    &=&\iota^{-1}([\rho(x), \rho(y)^{[p]}]-\rho([x, y^{[p]}]))-\alpha_{\rho} ([x,\underbrace{y,\ldots, y}_{p-1}],y)\\
    &=&\iota^{-1}([\rho(x), \underbrace{\rho(y),\ldots,\rho(y)}_{p}]-\rho([x, \underbrace{y, \ldots, y}_{p}]))\\
    &&- \iota^{-1}([\rho([x,\underbrace{y,\ldots, y}_{p-1}]), \rho(y)]+\rho([x, \underbrace{y, \ldots, y}_{p}]))=0
    \end{eqnarray*}
  for all $x, y\in L_{\bar{0}}.$
  \end{itemize}
  By the equation (\ref{e2.6}) we have $d^{2}_{*}(\alpha_{\rho}, \beta_{\rho})=0.$
  \item [(2)] For each $ (\alpha, \beta)\in Z^{2}_{*}(L; K),$ one can  construct a restricted  central extension $L$ by $K$ as follows.
   We define $L_{\alpha, \beta}=K\oplus L$ as a vector space and we define the
Lie bracket and p-operator in $L_{\alpha, \beta}$ by the formula
 \begin{eqnarray}\label{e3.8-2}
 [(k_{1}, l_{1}), (k_{2}, l_{2})]=(\alpha(l_{1}, l_{2}), [l_{1}, l_{2}]),
     \end{eqnarray}
    \begin{eqnarray}\label{e3.8-3}
    (k, l)^{[p]}=(\beta(l), l^{[p]}),
        \end{eqnarray}
where  $k\in K_{\bar{0}}, k_{1}, k_{2}\in K$ and $l\in L_{\bar{0}}, l_{1}, l_{2}\in L.$
\begin{itemize}
  \item [(2.1)] The equation (\ref{e3.8-2})   is well known that the Jacobi identity    is equivalent to  $d^{2}\alpha=0.$
  \item [(2.2)] The equation (\ref{e3.8-3}) is a $p$-operator precisely because $\beta$ has the $*$-property with respect to $\alpha$ and $\mathrm{ind}^{2}(\alpha,\beta)=0.$
\end{itemize}
\item [(3)] Next we show that for $(\alpha_{1}, \beta_{1}), (\alpha_{2}, \beta_{2})\in Z^{2}_{*}(L; K),$  such that the central extensions $L_{\alpha_{1}, \beta_{1}}$ and $L_{\alpha_{2}, \beta_{2}}$ are equivalent, we have $(\alpha_{1}-\alpha_{2}, \beta_{1}-\beta_{2})\in B^{2}_{*}(L; K)$. Let  $\sigma$ be an  isomorphism  of restricted Lie superalgebras and $\rho_{1}, \rho_{2}$ be two $\mathbb{F}$-linear maps   such that
     \begin{eqnarray}\label{e3.12}
\begin{array}[c]{ccccccccc}
 0&\longrightarrow &K&\stackrel{\iota_{1}}{\longrightarrow}&L_{\alpha_{1}, \beta_{1}}&\stackrel{\pi_{1}}{\longrightarrow}& L& \longrightarrow& 0\\
 &&\parallel&\circlearrowright&\downarrow\scriptstyle{\sigma}&\circlearrowright&\parallel&&\\
 0&\longrightarrow &K&\stackrel{\iota_{2}}{\longrightarrow}& L_{\alpha_{2}, \beta_{2}}&\stackrel{\pi_{2}}{\longrightarrow}& L& \longrightarrow& 0
\end{array}
\end{eqnarray}
     commutes and  $\pi_{1} \rho_{1}=\pi_{2} \rho_{2}=\mathrm{Id}_{L}$. Since
    $$\pi_{2}(\sigma\rho_{1}-\rho_{2})=\pi_{2}\sigma\rho_{1}-\pi_{2}\rho_{2}=\pi_{1}\rho_{1}-\pi_{2}\rho_{2}=0,$$
    so $(\sigma\rho_{1}-\rho_{2})(L)\subseteq \mathrm{Ker}(\pi_{2})=\mathrm{Im}(\iota_{2}).$ Then  $\varphi=\iota_{2}^{-1}(\sigma \rho_{1}-\rho_{2})\in C^{1}_{*}(L; K).$  We claim that $d^{1}_{*}(\varphi)=(\alpha_{2}-\alpha_{1}, \beta_{2}-\beta_{1}).$
    \begin{itemize}
      \item [(3.1)] By proof of \cite[Lemma 4.1]{ik}, we have $d^{1}\varphi=\alpha_{2}-\alpha_{1}.$
      \item [(3.2)] For all $x\in L_{\bar{0}},$ the same
methods were given in the proof of \cite[Lemma 3.25]{e} show that
     \begin{eqnarray*}
     \mathrm{ind}^{1}(\varphi)(x)=(\beta_{2}-\beta_{1})(x).
     \end{eqnarray*}
    \end{itemize}

\item [(4)] Let $(\alpha_{1}, \beta_{1}), (\alpha_{2}, \beta_{2}),\in Z^{2}_{*}(L; K),$  such that $(\alpha_{2}, \beta_{2})-(\alpha_{1}, \beta_{1})\in B^{2}_{*}(L; K)$, that is there exists $\varphi\in C^{1}_{*}(L; K)$  satisfying $(\alpha_{2}, \beta_{2})-(\alpha_{1}, \beta_{1})=d^{1}_{*}\varphi.$   Now we prove that the extensions $L_{\alpha_{1}, \beta_{1}}$ and $L_{\alpha_{2}, \beta_{2}}$  are equivalent.  Let us define
    $\sigma: L_{\alpha_{1}, \beta_{1}}\longrightarrow L_{\alpha_{2}, \beta_{2}}$ by
\begin{eqnarray}\label{e3.9-1}
\sigma(k,l)=(k-\varphi(l), l)
\end{eqnarray}
    for all $k \in K$ and $l\in L.$
    It is clear that $\sigma$ is bijective and such that diagram (\ref{e3.12}) commutes. We claim $\sigma$ is a  homomorphism of restricted Lie superalgebras.
   \begin{itemize}
     \item [(4.1)] Let us check that $\sigma$ is a homomorphism of Lie superalgebras. By the equations (\ref{e3.8-2}) and (\ref{e3.9-1}) we have
     \begin{eqnarray*}
  &&[\sigma(k_{1},l_{1}),\sigma(k_{2},l_{2})]=[(k_{1}-\varphi(l_{1}), l_{1}), (k_{2}-\varphi(l_{2}), l_{2})]\\
  &=&(\alpha_{2}(l_{1}, l_{2}),[l_{1}, l_{2}])=(\alpha_{1}(l_{1}, l_{2})+d^{1}\varphi(l_{1}, l_{2}),[l_{1}, l_{2}])\\
  &=&(\alpha_{1}(l_{1}, l_{2})-\varphi[l_{1}, l_{2}],[l_{1}, l_{2}])=\sigma(\alpha_{1}(l_{1}, l_{2}), [l_{1}, l_{2}])\\
  &=&\sigma[(k_{1},l_{1}),(k_{2},l_{2})]
     \end{eqnarray*}
     for all   $ k_{1}, k_{2}\in K$ and $ l_{1}, l_{2}\in L.$
     \item [(4.2)] Next we show that  $\sigma$ is restricted. By the equations (\ref{e3.8-3}) and (\ref{e3.9-1}) we have
      \begin{eqnarray*}
 \sigma((k, l)^{[p]})&=&\sigma(\beta_{1}(l), l^{[p]})=(\beta_{1}(l)-\varphi(l^{[p]}), l^{[p]})\\
  &=&(\beta_{2}(l), l^{[p]})=(k-\varphi(l), l)^{[p]}=(\sigma(k, l))^{[p]}
     \end{eqnarray*}
      for all   $ k\in K_{\bar{0}}$ and $ l\in L_{\bar{0}}.$
\end{itemize}
   \end{itemize}
 The proof is complete.
\end{proof}

\section{Ordinary and restricted cohomology of the restricted model filiform Lie superalgebras}
In this Section, we determine both the $1, 2$-dimensional ordinary and restricted cohomology   of restricted model filiform  Lie superalgebra with coefficients   in the $1$-dimensional trivial module.
\subsection{Restricted model filiform Lie superalgebras}
Fix  a pair of positive integers $n, m$,  let $$L_{n,m}=\mathrm{span}_{\mathbb{F}}\{X_{1}, \ldots, X_{n}\mid Y_{1}, \ldots, Y_{m}\}$$ and Lie super-brackets are given by
     \begin{eqnarray*}
     [X_{1}, X_{i}]=X_{i+1},\;\;2\leq i\leq n-1, \;\; [X_{1}, Y_{j}]=Y_{j+1},\;\; 1\leq j\leq m-1
     \end{eqnarray*}
with all other brackets are zero. We call $L_{n,m}$ the model filiform Lie superalgebra and  $\{X_{1}, \ldots, X_{n}\mid Y_{1}, \ldots, Y_{m}\}$ the standard basis of $L_{n,m}.$  Next we show that $L_{p,p}$ admits the structure of a restricted Lie superalgebra.
If we let $\lambda=(\lambda_{1},\ldots, \lambda_{p})\in\mathbb{F}^{p}$, and set
$$(\sum_{k=1}^{p} a_{k}X_{k})^{[p]}=(\sum_{k=1}^{p} a_{k}^{p}\lambda_{k})X_{p},$$
where $a_{k}\in \mathbb{F},$
then by \cite{ef2}, we know that $(L_{p,p})_{\bar{0}}$ is a restricted Lie algebra.  Since
 \begin{eqnarray*}
[(\sum_{k=1}^{p} a_{k}X_{k})^{[p]}, Y]=[(\sum_{k=1}^{p} a_{k}^{p}\lambda_{k})X_{p}, Y]=0=\mathrm{ad}(\sum_{k=1}^{p} a_{k}X_{k})^{p} (Y),
     \end{eqnarray*}
where  $Y\in (L_{p,p})_{\bar{1}}$, then $L_{p,p}$ is a  restricted Lie superalgebra,  we denote by $L_{p,p}^{\lambda}.$

\subsection{Cochain Complexes with Trivial Coefficients}
For ordinary cohomology of  $L_{p,p}^{\lambda}$ with coefficients   in the $1$-dimensional trivial module, the relevant cochain superspaces (with dual bases) are:
  \begin{eqnarray*}
  C^{0}(L_{p,p}^{\lambda}; \mathbb{F})\;\;\;\; \Omega^{0}&=&\{1\},\\
C^{1}(L_{p,p}^{\lambda}; \mathbb{F})\;\;\; \Omega^{1}&=&\{X^{i}, Y^{j}\mid 1\leq i,j \leq p\},\\
C^{2}(L_{p,p}^{\lambda}; \mathbb{F})\;\;\; \Omega^{2} &=&\{X^{i,j}, X^{k}Y^{l}, Y^{s,t}\mid 1\leq i<j \leq p, 1\leq k,l\leq p, 1\leq s \leq t\leq p\},\\
C^{3}(L_{p,p}^{\lambda}; \mathbb{F})\;\;\; \Omega^{3}&=&\{X^{i,j,k}, X^{r,s}Y^{t}, X^{u}Y^{v,w}, Y^{x,y,z}\mid 1\leq i<j<k \leq p, 1\leq r< s\leq p, \\
  && 1\leq t, u\leq p, 1\leq v\leq w\leq p, 1\leq x \leq y \leq z\leq p\}.
  \end{eqnarray*}
By the equation (\ref{e2.1}) we have
\begin{eqnarray}\label{e4.12}
  d^{1}(X^{i})=-X^{1,i-1}, d^{1}(Y^{j})=-X^{1}Y^{j-1}, d^{1}(X^{1})=d^{1}(X^{2})=d^{1}(Y^{1})=0,  i\geq3, j\geq2,
  \end{eqnarray}
  \begin{eqnarray}\label{e4.13}
d^{2}(X^{1, i})&=&0,d^{2}(X^{2,3})=0, d^{2}(X^{2,j})=-X^{1,2,j-1}, i\geq 2, j\geq4,\\
d^{2}(X^{i,i+1})&=&-X^{1,i-1,i+1}, d^{2}(X^{j,k})=-X^{1,j-1,k}-X^{1, j,k-1}, i, j\geq3, k-1>j,\\\label{e4.14}
d^{2}(X^{1}Y^{i})&=&0,d^{2}(X^{2}Y^{1})=0, d^{2}(X^{2}Y^{j})=-X^{1,2}Y^{j-1},   i\geq 1, j\geq2,\\\label{e4.15}
d^{2}(X^{i}Y^{1})&=&-X^{1,i-1}Y^{1}, d^{2}(X^{i}Y^{j})=-X^{1,i-1}Y^{j}-X^{1, i}Y^{j-1},  i\geq 3, j\geq 2,\\\label{e4.16}
d^{2}(Y^{1,1})&=&0, d^{2}(Y^{1,i})=-X^{1}Y^{1,i-1}, d^{2}(Y^{i,i})=-2X^{1}Y^{i-1,i},  i \geq 2,\\\label{e4.17}
d^{2}(Y^{i,j})&=&-X^{1}Y^{i-1,j}-X^{1}Y^{i,j-1},  j> i\geq 2.
  \end{eqnarray}

For restricted cohomology  of   $L_{p,p}^{\lambda}$ with coefficients   in the $1$-dimensional trivial module, the relevant cochain superspaces (with   bases) are:
  \begin{eqnarray*}
  && C^{0}_{*}(L_{p,p}^{\lambda}; \mathbb{F})\;\;\;\; \Omega^{0},\\
  &&C^{1}_{*}(L_{p,p}^{\lambda}; \mathbb{F})\;\;\;\; \Omega^{1},\\
  &&C^{2}_{*}(L_{p,p}^{\lambda}; \mathbb{F})\;\;\; \;\{(0, X^{i}_{*}), (\varphi, \varphi_{*})\mid 1\leq i\leq p, \varphi\in \Omega^{2}\},
  \end{eqnarray*}
where $X^{k}_{*}(\sum_{i=1}^{p}a_{i}X_{i})=a_{k}^{p},$ $a_{k}\in \mathbb{F},$ $\varphi_{*}: (L_{p,p}^{\lambda})_{\bar{0}}\longrightarrow \mathbb{F}$ that vanishes on the basis and has the $*$-property w.r.t. $\varphi.$
\subsection{The cohomology $H^{1}(L_{p,p}^{\lambda}, \mathbb{F})$ and $H^{1}_{*}(L_{p,p}^{\lambda}, \mathbb{F})$}
In this subsection we study $1$-dimensional restricted cohomology of $L_{p,p}^{\lambda}$ with coefficients   in the $1$-dimensional trivial module.
\begin{theorem}
Let $\lambda\in \mathbb{F}^{p}.$ Then we have
$$H^{1}(L_{p,p}^{\lambda}, \mathbb{F})=H^{1}_{*}(L_{p,p}^{\lambda}, \mathbb{F})$$
and the classes of $\{X^{1}, X^{2}, Y^{1}\}$ form a basis.
\end{theorem}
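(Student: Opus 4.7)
The plan is to compute $H^1$ first by pure linear algebra using the differential formulas in (4.12), then to upgrade to $H^1_*$ by checking that each $1$-cocycle also lies in the kernel of $\operatorname{ind}^1$. Throughout, I will exploit the fact that the coefficient module $\mathbb{F}$ is trivial, so the Lie action on $\mathbb{F}$ is zero. This immediately gives $d^0 = d^0_* = 0$, hence $B^1(L_{p,p}^\lambda;\mathbb{F}) = B^1_*(L_{p,p}^\lambda;\mathbb{F}) = 0$, and so $H^1 = Z^1$ and $H^1_* = Z^1_*$.

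For the ordinary piece I would take a general cochain $\varphi = \sum_{i=1}^p a_i X^i + \sum_{j=1}^p b_j Y^j$ in $C^1(L_{p,p}^\lambda;\mathbb{F})$ and apply (4.12). The formula shows
\[
d^1(\varphi) \;=\; -\sum_{i=3}^{p} a_i\,X^{1,i-1} \;-\; \sum_{j=2}^{p} b_j\, X^1 Y^{j-1},
\]
so the images live in the disjoint sets of basis vectors $\{X^{1,k}\}_{k \ge 2}$ and $\{X^1 Y^k\}_{k \ge 1}$ of $C^2$, which are linearly independent. Hence $d^1\varphi = 0$ forces $a_i = 0$ for $i \ge 3$ and $b_j = 0$ for $j \ge 2$, giving $Z^1 = \operatorname{span}\{X^1, X^2, Y^1\}$ and proving the ordinary statement.

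For the restricted piece, I would observe that $Z^1_* \subseteq Z^1$ by (2.5), so I only need to check whether each of the three surviving cocycles lies in $\ker(\operatorname{ind}^1)$. Since the action on $\mathbb{F}$ is trivial, (2.2) collapses to $\operatorname{ind}^1(\varphi)(x) = \varphi(x^{[p]})$ for $x \in (L_{p,p}^\lambda)_{\bar 0}$. Using the $[p]$-map $(\sum a_k X_k)^{[p]} = (\sum a_k^p \lambda_k)\, X_p$, I would note: (i) $X^1$ and $X^2$ both vanish on $X_p$ because $p > 2$ implies $p \ne 1, 2$; (ii) $Y^1$ vanishes on all of $(L_{p,p}^\lambda)_{\bar 0}$ for parity reasons, hence in particular on $X_p$. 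Therefore $\operatorname{ind}^1(X^1) = \operatorname{ind}^1(X^2) = \operatorname{ind}^1(Y^1) = 0$, so all three cocycles are restricted, and $Z^1_* = Z^1$.

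Combining the two computations yields $H^1_*(L_{p,p}^\lambda;\mathbb{F}) = Z^1_* = Z^1 = H^1(L_{p,p}^\lambda;\mathbb{F})$, with basis $\{[X^1], [X^2], [Y^1]\}$. There is no serious obstacle here: the argument is essentially bookkeeping once (4.12) is in hand. The only subtlety worth flagging is the hypothesis $p > 2$, which is exactly what forces $X^1, X^2$ to kill $X_p$ and thus makes the ordinary and restricted groups coincide; if $p = 2$ were allowed, the class of $X^2$ could fail to be restricted whenever some $\lambda_k \ne 0$.
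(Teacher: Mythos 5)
Your proposal is correct and follows essentially the same route as the paper: both compute $\ker d^{1}$ from (4.12) to get $\mathrm{span}_{\mathbb{F}}\{X^{1},X^{2},Y^{1}\}$, note that coboundaries vanish for trivial coefficients, and then verify $\mathrm{ind}^{1}$ kills these three cocycles because the image of the $[p]$-map lies in $\mathbb{F}X_{p}$ and $p>2$. Your explicit remark about why $p>2$ matters is a nice touch but does not change the substance of the argument.
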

\begin{proof}
It follows easily from (\ref{e4.12})  that $\mathrm{dim}(\mathrm{Ker}d^{1})=3$ and $\{X^{1}, X^{2}, Y^{1}\}$ is a basis for this kernel. So that
$$H^{1}(L_{p,p}^{\lambda}, \mathbb{F})=\mathrm{Ker}d^{1}=\mathrm{span}_{\mathbb{F}}\{X^{1}, X^{2}, Y^{1}\}.$$
Now, let $\varphi\in H^{1}(L_{p,p}^{\lambda}, \mathbb{F})$ and $\varphi=a_{1}X^{1}+a_{2}X^{2}+a_{3}Y^{1}$, where $a_{1}, a_{2}, a_{3}\in \mathbb{F}$. Then by the equation (\ref{e2.2}) we have
   \begin{eqnarray*}
  &&\mathrm{ind}^{1}(\varphi)(\sum_{i=1}^{p}b_{i}X_{i})=\varphi(\sum_{i=1}^{p}(b_{i}X_{i})^{[p]})=(a_{1}X^{1}+a_{2}X^{2}+a_{3}Y^{1})(\sum_{k=1}^{p} b_{k}^{p}\lambda_{k}X_{p})=0,
  \end{eqnarray*}
where  $b_{i}\in \mathbb{F}.$  So $H^{1}(L_{p,p}^{\lambda}, \mathbb{F})\subseteq H^{1}_{*}(L_{p,p}^{\lambda}, \mathbb{F}).$  The proof is complete.
\end{proof}

\subsection{The cohomology $H^{2}(L_{p,p}^{\lambda}, \mathbb{F})$ and $H^{2}_{*}(L_{p,p}^{\lambda}, \mathbb{F})$}
In this subsection we study $1$-dimensional restricted cohomology of $L_{p,p}^{\lambda}$ with coefficients   in the $1$-dimensional trivial module. For $i\in \{5, 7, \ldots, p+2\},$ $2\leq j\leq p$ and $k\in \{2, 4, \ldots, p+1\}$  we put
$$\varphi_{i}=\sum_{r=2}^{\lfloor \frac{i}{2}\rfloor}(-1)^{r}X^{r,i-r}, \psi_{j}=\sum_{s=2}^{j}(-1)^{s}X^{s}Y^{j-s+1}, \phi_{k}=\sum_{t=1}^{\frac{k}{2}-1}(-1)^{t}Y^{t,k-t}+(-1)^{\frac{k}{2}}\frac{1}{2}Y^{\frac{k}{2},\frac{k}{2}}.$$
Then we have the following Theorem.
\begin{theorem}\label{t4.2}
Let $\lambda\in \mathbb{F}^{p}.$ Then we have
$\mathrm{dim}H^{2}(L_{p,p}^{\lambda}, \mathbb{F})=2p+1$
and the cohomology classes of cocycles
$$\Omega^{4}=\{X^{1, p}, X^{1}Y^{p},  \varphi_{5}, \varphi_{7},\ldots, \varphi_{p+2}, \psi_{2}, \ldots, \psi_{p}, \phi_{2}, \phi_{4},\ldots, \phi_{p+1}\}$$
 form a basis.
\end{theorem}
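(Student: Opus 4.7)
The plan is to exploit the fact that the differentials in (\ref{e4.13})--(\ref{e4.17}) preserve the decomposition of $C^{2}(L_{p,p}^{\lambda};\mathbb{F})$ into three summands, spanned respectively by the $X^{i,j}$, the $X^{k}Y^{l}$, and the $Y^{s,t}$, each mapping into the corresponding summand of $C^{3}(L_{p,p}^{\lambda};\mathbb{F})$; in particular no 3-cochain of type $Y^{x,y,z}$ ever appears in $\mathrm{Im}\,d^{2}$. I would therefore compute $\ker d^{2}$ separately on each of these three subspaces and then subtract the coboundaries, which by (\ref{e4.12}) are exactly $\mathrm{span}_{\mathbb{F}}\{X^{1,2},\ldots,X^{1,p-1},\,X^{1}Y^{1},\ldots,X^{1}Y^{p-1}\}$.

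First I would verify directly that every element of $\Omega^{4}$ is a cocycle. Since $X^{1,j}$ and $X^{1}Y^{i}$ are already closed, the task reduces to checking $d^{2}\varphi_{i}=0$, $d^{2}\psi_{j}=0$, $d^{2}\phi_{k}=0$, each a short telescoping calculation. For example, in $\varphi_{i}=\sum_{r=2}^{\lfloor i/2\rfloor}(-1)^{r}X^{r,i-r}$ the image $d^{2}(X^{r,i-r})$ contributes the pair $-X^{1,r-1,i-r}-X^{1,r,i-r-1}$ (with obvious modifications when $r=2$ or $r=\lfloor i/2\rfloor$), and the alternating signs cancel these against the neighbouring summands; an identical mechanism works for $\psi_{j}$, while for $\phi_{k}$ the coefficient $\frac{1}{2}$ on $Y^{k/2,k/2}$ is precisely what absorbs the factor $2$ coming from $d^{2}(Y^{i,i})=-2X^{1}Y^{i-1,i}$.

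The heart of the argument is the reverse inclusion: showing that these cocycles span $\ker d^{2}$ modulo $B^{2}$. In the $X^{i,j}$-summand, discarding the automatically-closed $X^{1,j}$ leaves a map from $\mathrm{span}\{X^{i,j}:2\le i<j\le p\}$ to $\mathrm{span}\{X^{1,a,b}:2\le a<b\le p\}$ between spaces of equal dimension $\binom{p-1}{2}$. Equating the coefficient of each $X^{1,a,b}$ to zero gives a linear system that I would solve diagonal-by-diagonal with respect to the level $n=i+j$; the recursion forces the nullspace to be spanned by the $\varphi_{i}$, contributing $(p-1)/2$ classes. Analogous weight-by-weight analyses in the $X^{k}Y^{l}$- and $Y^{s,t}$-summands produce the $\psi_{j}$ ($2\le j\le p$) and the $\phi_{k}$ ($k=2,4,\ldots,p+1$) respectively, together with the automatic cocycles $X^{1}Y^{i}$ and $Y^{1,1}$ (the latter being proportional to $\phi_{2}$).

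Finally, quotienting by $B^{2}$ kills $X^{1,2},\ldots,X^{1,p-1}$ (leaving $X^{1,p}$) and $X^{1}Y^{1},\ldots,X^{1}Y^{p-1}$ (leaving $X^{1}Y^{p}$), while the $Y^{s,t}$-type cocycles are untouched since $d^{1}$ never lands in the $YY$-subspace. Summing the contributions gives $1+\frac{p-1}{2}+1+(p-1)+\frac{p+1}{2}=2p+1$, as claimed. The main obstacle will be the rank calculation in the $X^{i,j}$-subspace (and its $YY$ analogue, where the diagonal terms $Y^{s,s}$ complicate matters): one must argue carefully that the only relations among the images $d^{2}(X^{i,j})$ are the telescoping ones, which I plan to handle by induction on the level $n=i+j$, exploiting the fact that $d^{2}(X^{i,j})$ always lies in the "next-lower" diagonal of the target basis.
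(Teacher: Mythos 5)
Your proposal is correct and follows essentially the same route as the paper: verify that $\varphi_{i},\psi_{j},\phi_{k}$ are cocycles by the telescoping cancellations in (\ref{e4.13})--(\ref{e4.17}), show conversely that these together with the closed $X^{1,i}$ and $X^{1}Y^{j}$ span $\ker d^{2}$ (of dimension $4p-2$), and quotient by $\mathrm{Im}\,d^{1}=\mathrm{span}_{\mathbb{F}}\{X^{1,i},X^{1}Y^{j}\mid 2\le i\le p-1,\ 1\le j\le p-1\}$ of dimension $2p-3$. Your level-by-level ($n=i+j$) rank argument is simply a more explicit justification of the spanning step that the paper states informally, and your final count $1+\frac{p-1}{2}+1+(p-1)+\frac{p+1}{2}=2p+1$ agrees with the paper's.
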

\begin{proof}
It follows easily from the equation(\ref{e4.12})  that $\mathrm{dim}(\mathrm{Im}d^{1})=2p-3$ and
$$\Omega^{5}=\{X^{1,i}, X^{1}Y^{j}\mid 2\leq i \leq p-1, 1\leq j \leq p-1\}$$ is a basis for this image. By the equations (\ref{e4.13})--(\ref{e4.17}) we have
  \begin{eqnarray*}
  &&d^{2}\varphi_{i}=-X^{1,2,i-3}+\sum_{r=3}^{\lfloor \frac{i}{2}\rfloor-1}(-1)^{r}(-X^{1,r-1,i-r}-X^{1,r,i-r-1})-(-1)^{\lfloor \frac{i}{2}\rfloor}X^{1,\lfloor \frac{i}{2}\rfloor-1,i-\lfloor \frac{i}{2}\rfloor}=0,\\
  &&d^{2}\psi_{j}=-X^{1,2}Y^{j-2}+\sum_{s=3}^{j-1}(-1)^{s}(-X^{1,s-1}Y^{j-s+1}-X^{1,s}Y^{j-s})-(-1)^{j}X^{1,j-1}Y^{1}=0,\\
  &&d^{2}\phi_{k}=X^{1}Y^{1,k-2}+\sum_{t=2}^{\frac{k}{2}-1}(-1)^{t}(-X^{1}Y^{t-1,k-t}-X^{1}Y^{t,k-t-1})-(-1)^{\frac{k}{2}}X^{1}Y^{\frac{k}{2}-1,\frac{k}{2}}=0,
  \end{eqnarray*}
for $i\in \{5, 7, \ldots, p+2\},$ $2\leq j\leq p$ and $k\in \{2, 4, \ldots, p+1\}.$ Then
$$\Omega^{6}=\{X^{1,i}, X^{1}Y^{j}, \varphi_{5}, \varphi_{7},\ldots, \varphi_{p+2}, \psi_{i}, \phi_{2}, \phi_{4},\ldots, \phi_{p+1}\mid 2\leq i \leq p, 1\leq j \leq p\}\subseteq \mathrm{Ker}d^{2}.$$
Let $\varphi\in\mathrm{Ker}d^{2} $. By the equations  (\ref{e4.13})--(\ref{e4.17}),  we can suppose that
$$\varphi\in \mathrm{span}_{\mathbb{F}}\{X^{i,j}\mid 1\leq i<j \leq p\}\cup \mathrm{span}_{\mathbb{F}}\{X^{k}Y^{l}\mid 1\leq k, l\leq p\}\cup \mathrm{span}_{\mathbb{F}}\{Y^{s,t}\mid 1\leq s\leq t\leq p\}.$$
 \begin{itemize}
   \item [(1)] Suppose $\varphi\in \mathrm{span}_{\mathbb{F}}\{X^{i,j}\mid 1\leq i<j \leq p\}.$ Then it is clear that any cocycle element has to include either the basis element $X^{1,i}\;\;(2\leq i\leq p)$, and in this case this is a cocycle element, or it has to have one element of type $X^{2,j} \;(3\leq j \leq p)$ in the combination, and all those are combination of  $\varphi_{5}, \varphi_{7},\ldots, \varphi_{p+2}$.
   \item [(2)] Suppose $\varphi\in  \mathrm{span}_{\mathbb{F}}\{X^{k}Y^{l}\mid 1\leq k, l\leq p\}.$ Then it is clear that any cocycle element has to include either the basis element $X^{1}Y^{i} \;(1\leq i \leq p)$, and in this case this is a cocycle element, or it has to have one element of type $X^{2}Y^{j} \;(1\leq j \leq p)$ in the combination, and all those are combination of $\psi_{2}, \ldots, \psi_{p}$.
   \item [(3)] Suppose $\varphi\in  \mathrm{span}_{\mathbb{F}}\{Y^{s,t}\mid 1\leq s\leq t\leq p\}.$ Then it is clear that any cocycle element  has to have one  element of type $Y^{1,i} \;\;(1\leq i\leq p)$ in the combination, and all those are combination of $\phi_{2}, \phi_{4},\ldots, \phi_{p+1}$.
 \end{itemize}
The linear independence of the cocycle elements in $\Omega^{6}$ are clear. Then $\mathrm{Ker}d^{2}=\mathrm{span}_{\mathbb{F}}\Omega^{6}$ and $\mathrm{dim}\mathrm{Ker}d^{2}=4p-2.$ The proof is complete.
\end{proof}
\begin{theorem}
Let $\lambda=0.$ Then we have
$\mathrm{dim}H_{*}^{2}(L_{p,p}^{\lambda}, \mathbb{F})=3p+1$
and the cohomology classes of cocycles
$$\{(\varphi, \varphi_{*}), (0, X^{i}_{*})\mid \varphi\in \Omega^{4},  1\leq i \leq p\}$$
 form a basis.
\end{theorem}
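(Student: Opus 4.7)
My plan is to compare the restricted cohomology with the ordinary cohomology already computed in Theorem~\ref{t4.2} via the forgetful map $(\alpha,\beta)\mapsto\alpha$. The whole argument will hinge on showing that, under the hypotheses $\lambda=0$ and $M=\mathbb{F}$ trivial, both induced maps $\mathrm{ind}^{1}$ and $\mathrm{ind}^{2}$ vanish identically, so the restricted complex only differs from the ordinary one by a $p$-dimensional space of $p$-semilinear forms. This will yield $\dim H^{2}_{*}=\dim H^{2}+p=(2p+1)+p=3p+1$.

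\textbf{Step 1 (vanishing of the induced maps).} First I will specialize (\ref{e2.2}) and (\ref{e2.3}) to our setting. Triviality of the action kills every term in these formulas in which $L_{\bar{0}}$ acts on the output, and $\lambda=0$ forces $x^{[p]}=0$ for every $x\in L_{\bar{0}}$. Hence $\mathrm{ind}^{1}(\varphi)\equiv 0$ and
\[
\mathrm{ind}^{2}(\alpha,\beta)(x,y)=-\alpha\bigl([x,\underbrace{y,\ldots,y}_{p-1}],y\bigr).
\]
Then I will verify the ad-nilpotency identity $[x,y,\ldots,y]=0$ (with $p-1$ copies of $y$) inside $(L_{p,p}^{0})_{\bar{0}}$ by a short induction: writing $y=\sum b_{i}X_{i}$, one computes $\mathrm{ad}(y)^{k}(X_{1})\in b_{1}^{k-1}\cdot\mathrm{span}\{X_{k+2},\ldots,X_{p}\}$ for $k\ge 1$, and $\mathrm{ad}(y)^{k}(X_{i})=b_{1}^{k}X_{i+k}$ for $i\ge 2$; both vanish when $k=p-1$ because the resulting index exceeds~$p$. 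This forces $\mathrm{ind}^{2}\equiv 0$.

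\textbf{Step 2 (identification of the restricted (co)cycle spaces).} By (\ref{e2.5}), $B^{2}_{*}(L_{p,p}^{0};\mathbb{F})=\{(d^{1}\varphi,0):\varphi\in C^{1}\}$, which the forgetful map sends isomorphically onto $B^{2}$. By (\ref{e2.6}) and Step~1, $Z^{2}_{*}$ consists of all pairs $(\alpha,\beta)$ with $d^{2}\alpha=0$ and $\beta$ having the $*$-property with respect to $\alpha$. The forgetful map $Z^{2}_{*}\twoheadrightarrow Z^{2}$ is surjective, because for each basis cocycle $\varphi\in\Omega^{4}$ the specified lift $\varphi_{*}$ (vanishing on $\{X_{i}\}$) exists by \cite[Lemma 3.7]{e} applied in the case $d^{2}\varphi=0$. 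I will then analyze its kernel: an $\omega$ has the $*$-property with respect to $0$ iff it is $p$-semilinear, i.e.\ $\omega(\lambda x)=\lambda^{p}\omega(x)$ and $\omega(x+y)=\omega(x)+\omega(y)$; in characteristic $p$ this forces $\omega(\sum a_{i}X_{i})=\sum c_{i}a_{i}^{p}$, giving a $p$-dimensional kernel with basis $\{X^{i}_{*}\}_{i=1}^{p}$.

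\textbf{Step 3 (dimension count and basis).} Finally I will assemble the count
\[
\dim H^{2}_{*}(L_{p,p}^{0};\mathbb{F})=(\dim Z^{2}+p)-\dim B^{2}=\dim H^{2}(L_{p,p}^{0};\mathbb{F})+p=(2p+1)+p=3p+1
\]
using Theorem~\ref{t4.2}, and check that the asserted $3p+1$ classes $\{(\varphi,\varphi_{*}):\varphi\in\Omega^{4}\}\cup\{(0,X^{i}_{*}):1\le i\le p\}$ are independent modulo $B^{2}_{*}$: their images under the forgetful map, respectively the fibre projection, are precisely the known independent sets $\Omega^{4}$ and $\{X^{i}_{*}\}$. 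The main obstacle I anticipate is the ad-nilpotency identity in Step~1; the case $i=1$ must be handled separately from $i\ge 2$ since $\mathrm{ad}(y)$ does not act by a single scalar shift on $X_{1}$, and one must carefully track that the accumulated index eventually exceeds~$p$. Everything else is formal bookkeeping once $\mathrm{ind}^{1}$ and $\mathrm{ind}^{2}$ are shown to vanish.
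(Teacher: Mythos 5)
Your proposal is correct and follows essentially the same route as the paper: use $\lambda=0$ (together with the triviality of the coefficients) to kill $\mathrm{ind}^{1}$ and $\mathrm{ind}^{2}$, so that the restricted cocycles are the lifts $(\varphi,\varphi_{*})$ of the ordinary cocycles from Theorem \ref{t4.2} plus the $p$-dimensional space spanned by the $(0,X^{i}_{*})$, while the restricted coboundaries match the ordinary ones, giving $(2p+1)+p=3p+1$. The only difference is presentational (you package the count as a forgetful-map exact sequence rather than exhibiting explicit bases of $Z^{2}_{*}$ and $\mathrm{Im}\,d^{1}_{*}$), and you make explicit the ad-nilpotency of $(L_{p,p})_{\bar{0}}$ needed for the $i=0$ term of $\mathrm{ind}^{2}$, a point the paper passes over silently.
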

\begin{proof}
\begin{itemize}
  \item [(1)] If $\lambda=0,$ then $\mathrm{ind}^{2}=0.$ So for  that every $2$-cocycle $\varphi\in C^{2}(L_{p,p}^{\lambda}, \mathbb{F})$, we have $d^{2}_{*}(\varphi, \varphi_{*})=(d^{2}\varphi, \mathrm{ind}^{2}\varphi_{*})=0,$ that is $(\varphi, \varphi_{*})\in \mathrm{Ker}d^{2}_{*},$ where $\varphi_{*}: (L_{p,p}^{\lambda})_{\bar{0}}\longrightarrow \mathbb{F}$ that vanishes on the basis and has the $*$-property with respect to $\varphi.$ The proof of Theorem \ref{t4.2} shows that linearly independent subset
$$\{(\varphi, \varphi_{*}), (0, X^{i}_{*})\mid \varphi\in \Omega^{6}, \mid 1\leq i \leq p\}$$
is a basis  of $\mathrm{Ker}d^{2}_{*}.$
  \item [(2)] Let $\psi\in C^{1}_{*}(L_{p,p}^{\lambda}, \mathbb{F}).$ Then $d^{1}_{*}(\psi)=(d^{1}\psi, \mathrm{ind}^{1}(\psi)),$ where $\mathrm{ind}^{1}(\psi)(x)=\psi(x^{[p]})$ for all $x\in (L_{p,p}^{\lambda})_{\bar{0}}.$ Since $\lambda=0,$ so $\mathrm{ind}^{1}(\psi)=\psi_{*}.$ It follows easily from the proof of Theorem \ref{t4.2}
      $$\mathrm{Im}d^{1}_{*}=\mathrm{span}_{\mathbb{F}}\{(\varphi, \varphi_{*})\mid \varphi\in \Omega^{5}\}.$$
\end{itemize}
The proof is complete.
\end{proof}

\begin{theorem}
Let $\lambda\neq0.$ Then we have
$\mathrm{dim}H_{*}^{2}(L_{p,p}^{\lambda}, \mathbb{F})=3p-1$
and the cohomology classes of cocycles
$$\{(\varphi, \varphi_{*}), (0, X^{i}_{*})\mid \varphi\in \Omega^{4}-\{X^{1, p},  \varphi_{p+2}\}, 1\leq i \leq p\}$$
 form a basis.
\end{theorem}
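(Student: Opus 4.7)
The strategy is to compare with the previous theorem ($\lambda = 0$): the ordinary cocycle space $\Omega^6 \subseteq \mathrm{Ker}\, d^2$ is unchanged, so the only effect of turning on $\lambda$ enters through the restricted differentials. A pair $(\varphi, \omega) \in C_*^2(L_{p,p}^\lambda; \mathbb{F})$ lies in $\mathrm{Ker}\, d_*^2$ iff $d^2\varphi = 0$ and $\mathrm{ind}^2(\varphi, \omega) = 0$; only the second condition is new.

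The key simplification is the following. For $x, y \in (L_{p,p}^\lambda)_{\bar 0}$ and trivial coefficients, the term $x\omega(y)$ vanishes, every summand with $y^i$ ($i \geq 1$) vanishes by triviality of the action, and the single surviving $(i, j) = (0, p - 1)$ summand contains the left-normed $p$-fold bracket $[x, \underbrace{y, \ldots, y}_{p-1}] \in (L_{\bar 0})^p$. The filiform structure gives $(L_{\bar 0})^k = \mathrm{span}_{\mathbb{F}}\{X_{k+1}, \ldots, X_p\}$, so $(L_{\bar 0})^p = 0$ and this summand vanishes too. Writing $y = \sum_{k} a_k X_k$, I obtain
\[\mathrm{ind}^2(\varphi, \omega)(x, y) = \varphi(x, y^{[p]}) = \Bigl(\sum_{k=1}^p a_k^p \lambda_k \Bigr)\varphi(x, X_p),\]
which is independent of $\omega$. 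Since $\lambda \neq 0$, taking $y = X_{k_0}$ with $\lambda_{k_0} \neq 0$ forces the clean linear condition $\varphi(X_i, X_p) = 0$ for all $1 \leq i \leq p$. Scanning the basis $\Omega^6$: only $X^{1, p}$ (via $X^{1,p}(X_1, X_p) = 1$) and $\varphi_{p+2}$ (whose $r = 2$ summand is $(-1)^2 X^{2, p}$, giving $\varphi_{p+2}(X_2, X_p) = 1$) violate the condition; all other elements of $\Omega^6$ pair with $X_p$ trivially, either because they act on $Y$-arguments or because their $X^{r, s}$ components have $s < p$. Combined with the $p$ additional ``free'' directions $(0, X^i_*)$, which are automatic cocycles because $\mathrm{ind}^2(0, X^i_*) = 0$, this gives $\dim Z_*^2 = (4p - 4) + p = 5p - 4$.

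For the coboundary side, $\mathrm{ind}^1(\psi)(x) = \psi(x^{[p]}) = \psi(X_p) \sum_k \lambda_k X_*^k(x)$ vanishes on every basis $\psi \in \{X^i, Y^j\}$ except $\psi = X^p$, giving $d_*^1 X^p = (-X^{1, p-1}, \sum_k \lambda_k X_*^k)$; all other $d_*^1\psi$ match the $\lambda = 0$ case and have zero second component. The image therefore still has dimension $2p - 3$, so $\dim H_*^2 = (5p - 4) - (2p - 3) = 3p - 1$. To verify the claimed basis, project a relation $\sum a_\varphi (\varphi, \varphi_*) + \sum c_i (0, X_*^i) \equiv 0 \pmod{B_*^2}$ to the first component and compare against the span of first components of coboundaries, namely $\mathrm{span}_{\mathbb{F}}\{X^{1, 2}, \ldots, X^{1, p-1}, X^1 Y^1, \ldots, X^1 Y^{p-1}\}$: since the chosen $\varphi$'s are linearly independent basis elements of $C^2$ disjoint from this span, all $a_\varphi$ vanish, and then the only coboundary that contributes to the $(0, \cdot)$-part is $d_*^1 X^p$, which would also contribute $-X^{1, p-1}$ on the first component, forcing its coefficient and hence all $c_i$ to vanish. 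The main obstacle is the simplification of $\mathrm{ind}^2$ in the first step—specifically, recognizing that the $(0, p-1)$ summand vanishes by the nilpotency $(L_{\bar 0})^p = 0$; once that is in place, the remainder is dimension-counting and a direct check on basis cocycles.
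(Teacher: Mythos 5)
Your proposal is correct and follows essentially the same route as the paper: you reduce $\mathrm{ind}^{2}(\varphi,\omega)$ to $\varphi(x,y^{[p]})$ (the paper does this by direct evaluation on basis pairs $(X_i,X_j)$, you by the nilpotency $(L_{\bar 0})^{p}=0$ killing the left-normed $p$-fold bracket), derive the condition that the $X^{i,p}$-coefficients of a restricted cocycle vanish, identify $X^{1,p}$ and $\varphi_{p+2}$ as the excluded classes, and note the extra $\sum_k\lambda_kX^k_*$ term in $d^1_*(X^p)$ before counting $\dim Z^2_*=5p-4$, $\dim B^2_*=2p-3$. The only differences are presentational: you supply slightly more justification for the collapse of $\mathrm{ind}^2$ and for the final linear-independence check than the paper, which defers these to the proof of its Theorem 4.2.
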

\begin{proof}
\begin{itemize}
  \item [(1)] Let $(\varphi, \omega)\in C^{2}_{*}(L_{p,p}^{\lambda}, \mathbb{F})$ and
  $$\varphi=\sum_{1\leq i<j \leq p}a_{i,j}X^{i,j}+\sum_{1\leq k, l\leq p}b_{k,l} X^{k}Y^{l}+\sum_{1\leq s\leq t\leq p} c_{s,t}Y^{s,t}\in C^{2}(L_{p,p}^{\lambda}, \mathbb{F}),$$
where $a_{ij}, b_{k,l}, c_{s,t}\in \mathbb{F}.$ By the equation (\ref{e2.3}) we have
  $$\mathrm{ind}^{2}(\varphi,\omega)(X_{i}, X_{j})=\varphi(X_{i}, X_{j}^{[p]})=\lambda_{j}a_{i,p}$$
  for all $1\leq i, j \leq p.$
  Since $\lambda\neq 0,$ so $d^{2}_{*}(\varphi, \omega)=(d^{2}\varphi, \mathrm{ind}^{2}(\varphi,\omega))=0$ if and only if $\varphi\in \mathrm{Ker}d^{2}$ and $a_{1,p}=\cdots= a_{p-1,p}=0$. This observation, together with the proof of Theorem \ref{t4.2}, proves the following
  $$\{(\varphi, \varphi_{*}), (0, X^{i}_{*})\mid \varphi\in \Omega^{6}-\{X^{1, p}, \varphi_{p+2}\} \mid 1\leq i \leq p\}$$
  is a basis  of $\mathrm{Ker}d^{2}_{*}.$
  \item [(2)] By the equation (\ref{e2.5}) we have
  \begin{eqnarray*}
  &&d^{1}_{*}(X^{1})=d^{1}_{*}(X^{2})=(0, 0), d^{1}_{*}(X^{i})=(-X^{1,i-1},(-X^{1,i-1})_{*}), 3\leq i\leq p-1,\\
  && d^{1}_{*}(X^{p})=(-X^{1,p-1},(-X^{1,p-1})_{*}+\sum_{k=1}^{p}\lambda_{k}X^{k}_{*}),\\
   &&d^{1}_{*}(Y^{1})=(0, 0), d^{1}_{*}(Y^{j})=(-X^{1}Y^{j-1},(-X^{1}Y^{j-1})_{*}), 2\leq j\leq p.
  \end{eqnarray*}
  Then  $$\mathrm{Im}d^{1}_{*}=\mathrm{span}_{\mathbb{F}}\{(\varphi, \varphi_{*}), (X^{1,p-1}, (X^{1,p-1})_{*}-\sum_{k=1}^{p}\lambda_{k}X^{k}_{*})\mid \varphi\in \Omega^{5}-\{X^{1, p-1}\}\}.$$
\end{itemize}
The proof is complete.
\end{proof}

\end{document}